\newcommand{\lyxmathsym}[1]{\ifmmode\begingroup\def\b@ld{bold}
  \text{\ifx\math@version\b@ld\bfseries\fi#1}\endgroup\else#1\fi}
\providecommand{\tabularnewline}{\\}
\numberwithin{equation}{section}
\numberwithin{figure}{section}
\newcommand{\lyxaddress}[1]{
\par {\raggedright #1
\vspace{1.4em}
\noindent\par}
}
  \theoremstyle{definition}
  \newtheorem{defn}{\protect\definitionname}[section]
  \theoremstyle{plain}
  \newtheorem{lem}{\protect\lemmaname}[section]
  \theoremstyle{plain}
  \newtheorem{prop}{\protect\propositionname}[section]
  \theoremstyle{remark}
  \newtheorem{rem}{\protect\remarkname}[section]
  \theoremstyle{plain}
  \newtheorem{thm}{\protect\theoremname}[section]
  \theoremstyle{definition}
  \newtheorem{example}{\protect\examplename}[section]
  \providecommand{\definitionname}{Definition}
  \providecommand{\examplename}{Example}
  \providecommand{\lemmaname}{Lemma}
  \providecommand{\propositionname}{Proposition}
  \providecommand{\remarkname}{Remark}
\providecommand{\theoremname}{Theorem}
\begin{document}

\title{A Bayesian nonparametric chi-squared goodness-of-fit test}

\maketitle
\begin{center}
~ \\
Reyhaneh Hosseini and Mahmoud Zarepour
\par\end{center}

\lyxaddress{\begin{center}
Department of Mathematics and Statistics\\
University of Ottawa
\par\end{center}}
\begin{abstract}
~The Bayesian nonparametric inference and Dirichlet process are popular
tools in Bayesian statistical methodologies. In this paper, we employ
the Dirichlet process in a hypothesis testing to propose a Bayesian
nonparametric chi-squared goodness-of-fit test. In our new Bayesian
nonparametric approach, we consider the Dirichlet process as the prior
for the distribution of the data and carry out the test based on the
Kullback-Leibler distance between the updated Dirichlet process and
the hypothesized distribution. We prove that this distance asymptotically
converges to the same chi-squared distribution as the classical frequentist's
chi-squared test does. Moreover, a Bayesian nonparametric chi-squared
test of independence for a contingency table is described. In addition,
by computing the Kullback-Leibler distance between the Dirichlet process
and the hypothesized distribution, a method to obtain an appropriate
concentration parameter for the Dirichlet process is presented.
\end{abstract}
\textbf{Keywords:} Bayesian nonparametric inference, Dirichlet process,
Pearson's chi-squared test, chi-squared test of independence, goodness-of-fit
test, Brownian bridge, Kullback-Leibler distance.

\subsubsection*{MSC 2010: \textmd{Primary 62G20; secondary 62G10.}}

\section{Introduction}

~The Bayesian nonparametric plays a crucial role in statistical inference.
The Dirichlet process perhaps is the most popular prior in Bayesian
nonparametric statistics and it has been applied in many different
areas of statistical inference. The most common applications of Dirichlet
process are in density estimation and clustering via mixture models.
See for instance, \citet{neal1992bayesian}, \citet{lo1984class}
and \citet{escobar1995bayesian}. In this paper, we suggest a Bayesian
nonparametric chi-squared goodness-of-fit test based on the Kullback-Leibler
distance between the posterior Dirichlet process and the hypothesized
distribution.

There are many one-sample and two-sample parametric goodness-of-fit
tests in the literature. See for example, \citet{d1986goodness} for
a review. The chi-squared test examines whether the data has a specified
distribution $F_{0}$, i.e., the null hypothesis is given as $H_{0}:F=F_{0}$
where $F_{0}$ is the true distribution for the observed data. Some
extensions of chi-squared goodness-of-fit test to Bayesian model assessment
where the test statistic is based on the posterior distribution, are
described by \citet{johnson2004bayesian} and \citet{johnson2007bayesian}.

In Bayesian nonparametric inference, there are two strategies of goodness-of-fit
test. The first strategy considers a prior for the true distribution
of data and constructs the test based on the distance between the
posterior distribution and the proposed one. For example, \citet{muliere1998approximating},
\citet{swartz1999nonparametric}, \citet{al2013bayesian,al2014goodness}
considered the Dirichlet process prior and the Kolmogorov distance.
\citet{al2014goodness} and \citet{labadi2014two} carried out a goodness-of-fit
test and a two-sample goodness-of-fit test, respectively by considering
the Dirichlet process as a prior and the test statistic based on the
Kolmogorov distance. \citet{viele2000evaluating} used the Dirichlet
process and the Kullback-Leibler distance for testing the discrete
distributions. \citet{hsieh2013nonparametric} considered the Polya
tree model as the prior and measured the Kullback-Leibler distance
for testing the continuous distributions.

The second strategy is conducted by embedding the hypothesized model
$H_{0}$ in an alternative model $H_{1}$ and placing a prior on that.
To examine the hypothesized model, the Bayes factor is used as a measure
of evidence against the hypothesized model. For example, \citet{carota1994bayes}
and \citet{florens1996bayesian} used a Dirichlet process prior for
the alternative model. \citet{tokdar2011bayesian} carried out a Bayesian
test for normality by considering a Dirichlet process mixture for
the alternative model. Some authors used other Bayesian nonparametric
priors. For instance, \citet{holmes2015two} described a Bayesian
nonparametric two sample hypothesis testing based on a Polya tree
prior. In order to test for the normal distribution, \citet{berger2001bayesian}
considered a mixture of Polya trees for the alternative model distribution,
while \citet{verdinelli1998bayesian} suggested a mixture of Gaussian
processes.

Our new proposed chi-squared goodness of fit test is based on the
first approach discussed above. We consider a Dirichlet process prior
for the distribution of the observed data and define the chi-squared
test statistic based on the Kullback-Leibler distance between the
Dirichlet process posterior and the hypothesized distribution. In
fact, in our Bayesian nonparametric approach, the test proceeds by
constructing the chi-squared test statistic based on the distance
between the observed probabilities obtained by the Dirichlet process
posterior and the expected probabilities. Indeed, instead of counting
the observed frequencies in each bin, we place a prior on the distribution
of the data. The probability of each bin is obtained by the exact
posterior probability of that bin. Then, our new test statistic compares
the posterior probabilities with the probabilities under the null
hypothesis. In this procedure, based on the suggested Dirichlet prior,
we know the exact distribution of the test statistic. Using a similar
approach, we also determine an appropriate concentration parameter
for the Dirichlet process which is required to decide on an appropriate
prior.

The outline of the paper is organized as follows. In Section 2, we
give an essential background on Dirichlet process and its properties.
In Section 3, we briefly review the definition of the Kullback-Leibler
divergence. Following this, we obtain the Kullback-Leibler distance
between the Dirichlet process and a continuous distribution and compute
its mean and variance. Section 4 discusses a Bayesian nonparametric
chi-squared goodness-of-fit test based on the Kullback-Leibler distance
between the Dirichlet process posterior and the hypothesized distribution.
In Section 5, we extend our suggested chi-squared test to present
a Bayesian nonparametric chi-squared test of independence of two random
variables. We also describe a method to obtain an appropriate concentration
parameter based on the Kullback-Leibler distance between the Dirichlet
process and the proposed distribution. Simulation studies of the tests
with a data illustration appear in Section 6. In the final section,
we conclude with a brief discussion and the Appendix contains the
theoretical results.

\section{Dirichlet Process }

In this section, we review the construction, various properties and
some series representations of the Dirichlet process. The Dirichlet
process was initially formalized by \citet{fb1973} for general Bayesian
statistical modeling as a distribution over probability distributions.
\begin{defn}
(\citet{fb1973}) Let $\mathscr{X}$ be a set, $\mathscr{A}$ be a
$\sigma-$field of subsets of $\mathscr{X}$, $H$ be a probability
measure on $(\mathscr{X,A})$ and $\alpha>0$. A random probability
measure $P$ with parameters $\alpha$ and $H$ is called a Dirichlet
process (denoted by $P\sim DP(\alpha H)$ ) on $(\mathscr{X,A})$
if for any finite measurable partition $\{A_{1},\ldots,A_{k}\}$ of
$\mathscr{X}$, the joint distribution of the random variables $P(A_{1}),\ldots,P(A_{k})$
is a k-dimensional Dirichlet distribution with parameters $\alpha H(A_{1}),\ldots,\alpha H(A_{k})$,
where $k\geq2$.
\end{defn}
We assume that if $H(A_{k})=0$, then $P(A_{k})=0$ with probability
one. Then, a Dirichlet process is parameterized by $\alpha$ and $H$
which are called the concentration parameter and the base distribution,
respectively. The base distribution is also the mean of the Dirichlet
process, i.e., for any measurable set $A\subset\mathscr{X}$, $E\left(P(A)\right)=H(A).$
One of the most remarkable properties of the Dirichlet process is
that it satisfies the conjugacy property. Let $X_{1},\ldots,X_{m}$
be an i.i.d. sample from $P\thicksim\textrm{\textrm{\ensuremath{DP}}}(\alpha H)$.
The posterior distribution of $P$ given $X_{1},\ldots,X_{m}$ is
a Dirichlet process with parameters
\begin{equation}
\alpha_{m}^{*}=\alpha+m\,\textrm{ and }\, H_{m}^{*}=\frac{\alpha}{\alpha+m}H+\frac{m}{\alpha+m}\frac{\sum_{i=1}^{m}\delta_{X_{i}}}{m}\label{eq:h&a}
\end{equation}
 and denoted by $P_{m}^{*}=\left(P\mid X_{1},\ldots,X_{m}\right)\thicksim DP(\alpha_{m}^{*}H_{m}^{*})$,
where $\delta_{X}(\cdot)$ is the Dirac measure, i.e., $\delta_{X}(A)=1$
if $X\in A$ and $0$ otherwise.

As it is seen in \eqref{eq:h&a}, the posterior base distribution
$H_{m}^{*}$ is a weighted average of $H$ and the empirical distribution
$F_{m}=\frac{\sum_{i=1}^{m}\delta_{X_{i}}}{m}$. Thus, for large values
of $\alpha$, $H_{m}^{*}\overset{a.s.}{\rightarrow}H$. On the other
hand, as $\alpha\rightarrow0$ or as the number of observations $m$
grows large, $H_{m}^{*}$ becomes non-informative in the sense that
$H_{m}^{*}$ is just given by the empirical distribution and is a
close approximation of the true underlying distribution of $X_{i}$,
$i=1,\ldots,m$. This confirms the consistency property of the Dirichlet
process, i.e., the posterior Dirichlet process approaches the true
underlying distribution. For a discussion about the consistency property
of Dirichlet process, see \citet{ghosal2010dirichlet} and \citet{james2008large}.

A sum representation of Dirichlet process is presented by \citet{fb1973}
based on the work of \citet{ferguson1972representation}. Specifically,
let $(\theta_{i})_{i\geq1}$ be a sequence of i.i.d. random variables
with common distribution $H$ and $(E_{k})_{k\geq1}$ be a sequence
of i.i.d. random variables from the exponential distribution with
mean 1. If $\Gamma_{i}=E_{1}+\cdots+E_{i}$ and $(\Gamma_{i})_{i\geq1}$
are independent from $(\theta_{i})_{i\geq1}$, then,

\begin{equation}
P=\underset{i=1}{\overset{\infty}{\sum}}\frac{L^{-1}(\Gamma_{i})}{\overset{\infty}{\underset{i=1}{\sum}}L^{-1}(\Gamma_{i})}\delta_{\theta_{i}}=\underset{i=1}{\overset{\infty}{\sum}}p_{i}\delta_{\theta_{i}}\label{eq:pFer}
\end{equation}
is a Dirichlet process with parameters $\alpha$ and $H$ where $L(x)=\alpha\int_{x}^{\infty}t^{-1}e^{-t}dt,\: x>0$
and $L^{-1}(y)=\inf\{x>0:L(x)\geq y\}.$ \citet{ishwaran2002exact}
introduced a finite sum approximation for the Dirichlet process which
is easier to work with. Let $\mathbf{p}=(p_{1,n},\ldots,p_{n,n})$
has a Dirichlet distribution with parameters $(\alpha/n,\ldots,\alpha/n)$
denoted by $\textrm{Dir}(\alpha/n,\ldots,\alpha/n)$ and $(\theta_{i})_{1\leq i\leq n}$
be a sequence of i.i.d. random variables with distribution $H$ and
independent of $(p_{i,n})_{1\leq i\leq n}$. Also, let $(\mathcal{G}_{i,n})_{1\leq i\leq n}$
be i.i.d. random variables from $\textrm{Gamma}(\alpha/n,1)$ distribution
and $p_{i,n}=\mathcal{G}_{i,n}/\mathcal{G}_{n}$, where $\mathcal{G}_{n}=\mathcal{G}_{1,n}+\cdots+\mathcal{G}_{n,n}.$
Then,

\begin{equation}
P_{n}=\underset{i=1}{\overset{n}{\sum}}p_{i,n}\delta_{\theta_{i}}=\underset{i=1}{\overset{n}{\sum}}\frac{\mathcal{G}_{i,n}}{\mathcal{G}_{n}}\delta_{\theta_{i}}\label{eq:finite}
\end{equation}
is called a finite-dimensional Dirichlet process and approximates
the Ferguson's Dirichlet process weakly. Another finite sum representation
of the Dirichlet process with monotonically decreasing weights is
presented in \citet{zarepour2012rapid}. Specifically, let $(\theta_{i})_{1\leq i\leq n}$
be a sequence of i.i.d. random variables with values in $\mathscr{X}$
and common distribution $H$ and independent of $(\Gamma_{i})_{1\leq i\leq n+1}$.
Let $X_{n}\sim\textrm{Gamma}(\alpha/n,1)$ and define

\[
G_{n}(x)=\Pr(X_{n}>x)=\frac{1}{\Gamma(\alpha/n)}\int_{x}^{\infty}t^{(\alpha/n)-1}e^{-t}dt
\]
and

\[
G_{n}^{-1}(y)=\inf\{x:G_{n}(x)\geq y\}.
\]
Then, as $n\rightarrow\infty,$
\begin{equation}
P_{n}=\underset{i=1}{\overset{n}{\sum}}\frac{G_{n}^{-1}(\frac{\Gamma_{i}}{\Gamma_{n+1}})}{\overset{n}{\underset{i=1}{\sum}}G_{n}^{-1}(\frac{\Gamma_{i}}{\Gamma_{n+1}})}\delta_{\theta_{i}}\overset{a.s.}{\rightarrow}P=\underset{i=1}{\overset{\infty}{\sum}}\frac{L^{-1}(\Gamma_{i})}{\overset{\infty}{\underset{i=1}{\sum}}L^{-1}(\Gamma_{i})}\delta_{\theta_{i}}.
\end{equation}
If we define
\begin{equation}
p_{i,n}=\frac{G_{n}^{-1}(\frac{\Gamma_{i}}{\Gamma_{n+1}})}{\overset{n}{\underset{i=1}{\sum}}G_{n}^{-1}(\frac{\Gamma_{i}}{\Gamma_{n+1}})},\label{eq:pi}
\end{equation}
then, $P_{n}$ can be written as
\begin{equation}
P_{n}=\underset{i=1}{\overset{n}{\sum}}p_{i,n}\delta_{\theta_{i}}.\label{eq:pd}
\end{equation}
This finite sum representation converges almost surely to Ferguson's
representation and empirically converges faster than the other representations.
For other sum representations of Dirichlet process, see for example,
\citet{sethuraman1991constructive} and \citet{bondesson1982simulation}.
In the next section, we will discuss computing the Kullback-Leibler
distance between the Dirichlet process and a continuous distribution
and its mean and variance.

\section{Kullback-Leibler distance between the Dirichlet process and a continuous
distribution}

The Kullback-Leibler distance that measures the distance between two
distributions introduced by \citet{kullback1951information}. Suppose
$\mathcal{P}$ and $\mathcal{Q}$ are two probability measures for
discrete random variables on a measurable space $(\Omega,\mathrm{\mathcal{F})}$.
The Kullback-Leibler divergence between $\mathcal{P}$ and $\mathcal{Q}$
is defined as

\begin{equation}
D_{KL}(\mathcal{P}\parallel\mathcal{Q})=\underset{i}{\sum}\mathcal{P}(i)\log\left(\frac{\mathcal{P}(i)}{\mathcal{Q}(i)}\right).
\end{equation}
For continuous probability measures $\mathcal{P}$ and $\mathcal{Q}$
with $\mathcal{P}$ absolutely continuous with respect to $\mathcal{Q}$,
the Kullback-Leibler distance is written as
\[
D_{KL}(\mathcal{P}\parallel\mathcal{Q})=\int\log\left(\frac{d\mathcal{P}}{d\mathcal{Q}}\right)d\mathcal{P}
\]
where $\frac{d\mathcal{P}}{d\mathcal{Q}}$ is the Radon-Nikodym derivative
of $\mathcal{P}$ with respect to $\mathcal{Q}$. Let $\mathcal{P}\ll\lambda$
and $\mathcal{Q}\ll\lambda$ where $\lambda$ is the Lebesgue measure.
If the densities of $\mathcal{P}$ and $\mathcal{Q}$ with respect
to Lebesgue measure are denoted by $p(x)$ and $q(x),$ respectively,
then the Kullback-Leibler distance is written as
\begin{equation}
D_{KL}(\mathcal{P}\parallel\mathcal{Q})=\underset{\mathbb{R}}{\int}p(x)\log\left(\frac{p(x)}{q(x)}\right)dx.
\end{equation}
We compute the distance between the random distribution $P$ from
a Dirichlet process $DP(\alpha H)$ and a continuous distribution
$F$ with density $f(x)$. Since $P$ is a discrete measure and $F$
is continuous, we estimate the density $f(x)$ by its histogram estimator
on a partitioned space. Also, since the Kullback-Leibler distance
is not symmetric, we compute both distances $\textmd{D}_{KL}(P\parallel F)$
and $\textmd{D}_{KL}(F\parallel P)$.
\begin{lem}
\label{Lemma}Let $H$ and $F$ be two distributions defined on the
same space $\mathcal{X}$ and $P_{n}=\sum_{i=1}^{n}p_{i,n}\delta_{\theta_{i}}$
be a random distribution as defined in \eqref{eq:finite}, i.e., $\theta_{1},\ldots,\theta_{n}$
are i.i.d. generated from $H$ with corresponding order statistics
$\theta_{(1)},\ldots,\theta_{(n)}$. We have

\begin{equation}
D_{KL}(P_{n}\parallel F)=-\mathcal{H}(\mathbf{p})-\overset{n}{\underset{i=1}{\sum}}p_{i,n}\log(q_{i})\label{eq:d1}
\end{equation}
and

\begin{equation}
D_{KL}(F\parallel P_{n})=-\mathcal{H}(\mathbf{q})-\overset{n}{\underset{i=1}{\sum}}q_{i}\log(p_{i,n})\label{eq:d2}
\end{equation}
where $\mathcal{H}(\mathbf{p})=-\overset{n}{\underset{i=1}{\sum}}p_{i,n}\log(p_{i,n})$
is the entropy of $P_{n}$ and $\mathcal{H}(\mathbf{q})=-\overset{n}{\underset{i=1}{\sum}}q_{i}\log(q_{i})$
with $q_{i}=\frac{\triangle F(x_{i})}{\triangle x_{i}}$. \end{lem}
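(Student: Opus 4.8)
The plan is to reduce both identities to the elementary discrete form of the Kullback--Leibler divergence by replacing the continuous distribution $F$ with its histogram estimate on the partition generated by the sampled atoms. Since $P_{n}=\sum_{i=1}^{n}p_{i,n}\delta_{\theta_{i}}$ is purely atomic while $F$ admits a density $f$, the two measures are mutually singular, so $D_{KL}$ cannot be evaluated directly; the order statistics $\theta_{(1)}<\cdots<\theta_{(n)}$ are used precisely to discretize $F$. First I would fix the partition of $\mathcal{X}$ induced by the consecutive order statistics, write $\triangle x_{i}$ for the width of the $i$-th cell and $\triangle F(x_{i})$ for the $F$-mass it carries, and define the histogram density estimate on that cell by $q_{i}=\triangle F(x_{i})/\triangle x_{i}$. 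On this common partition, $P_{n}$ is represented by the weights $(p_{1,n},\dots,p_{n,n})$ and $F$ by the local density values $(q_{1},\dots,q_{n})$.

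For the first identity, I would start from the discrete expression
\[
D_{KL}(P_{n}\parallel F)=\sum_{i=1}^{n}p_{i,n}\log\!\left(\frac{p_{i,n}}{q_{i}}\right),
\]
in which $q_{i}$ supplies the value of the density of $F$ at the atom $\theta_{i}$. Splitting the logarithm as $\log(p_{i,n}/q_{i})=\log p_{i,n}-\log q_{i}$ and summing termwise gives $\sum_{i}p_{i,n}\log p_{i,n}-\sum_{i}p_{i,n}\log q_{i}$. Recognizing the first sum as $-\mathcal{H}(\mathbf{p})$ from the definition of the entropy of $P_{n}$ yields \eqref{eq:d1} at once. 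For the second identity I would run the same argument after discretizing $F$ rather than $P_{n}$: writing
\[
D_{KL}(F\parallel P_{n})=\sum_{i=1}^{n}q_{i}\log\!\left(\frac{q_{i}}{p_{i,n}}\right)
\]
and splitting the logarithm as $\log q_{i}-\log p_{i,n}$, the term $\sum_{i}q_{i}\log q_{i}$ is exactly $-\mathcal{H}(\mathbf{q})$, which after separating the remaining sum delivers \eqref{eq:d2}.

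I expect the only real content to lie in setting up the discretization consistently rather than in the algebra, which is routine once the representation is fixed. The delicate point is the justification for substituting the histogram density value $q_{i}$ into a formula alongside the probability weights $p_{i,n}$: this is the step that converts the genuinely singular pair $(P_{n},F)$ into a pair of comparable discrete profiles, and it is where the choice of the order-statistic partition (so that each atom $\theta_{(i)}$ is associated with a single cell of the histogram) has to be invoked. Once that identification is made explicit, both displays follow purely by separating the logarithm and reading off the entropy terms $\mathcal{H}(\mathbf{p})$ and $\mathcal{H}(\mathbf{q})$, so no limiting or measure-theoretic argument beyond the definition of the histogram estimator is needed.
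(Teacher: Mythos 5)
Your proposal is correct and follows essentially the same route as the paper's own proof: interleave a partition with the order statistics so each atom $\theta_{(i)}$ sits in a single cell, replace $F$ by its histogram values $q_{i}=\triangle F(x_{i})/\triangle x_{i}$ on that partition, apply the discrete Kullback--Leibler formula, and split the logarithm to read off $\mathcal{H}(\mathbf{p})$ and $\mathcal{H}(\mathbf{q})$. Your explicit remark that the singularity of the pair $(P_{n},F)$ is what forces the histogram discretization is a point the paper leaves implicit, but it does not change the argument.
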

\begin{proof}
See the Appendix.

The mean and the variance of the Kullback-Leibler divergences \eqref{eq:d1}
and \eqref{eq:d2} are given in the following Proposition and Remark.\end{proof}
\begin{prop}
\label{prob d} Let $H$ and $F$ be distributions defined on the
same space $\mathcal{X}$ and $P_{n}=\sum_{i=1}^{n}p_{i,n}\delta_{\theta_{i}}$
be a random distribution as defined in \eqref{eq:finite}, i.e., $\theta_{1},\ldots,\theta_{n}$
are i.i.d. generated from $H$ with corresponding order statistics
$\theta_{(1)},\ldots,\theta_{(n)}$. Then, the mean and the variance
of the Kullback-Leibler divergence \eqref{eq:d1} are given as

\begin{equation}
E(D_{KL}(P_{n}\parallel F))=n\left(\psi\left(\frac{\alpha}{n}+1\right)-\psi(\alpha+1)\right)-\frac{1}{n}\overset{n}{\underset{i=1}{\sum}}\log(q_{i})\label{eq:ed1}
\end{equation}
and
\begin{eqnarray}
Var(D_{KL}(P_{n} & \parallel & F))=\overset{n}{\underset{i=1}{\sum}}\left\{ Var\left(p_{i,n}\log(p_{i,n})\right)+\left(\log(q_{i})\right)^{2}Var(p_{i,n})\right\} \nonumber \\
 &  & -2\overset{n}{\underset{i=1}{\sum}}\left\{ \log(q_{i})Cov\left(p_{i,n}\log(p_{i,n}),p_{i,n}\right)\right\} \nonumber \\
 &  & +2\overset{}{\underset{i<j}{\sum}}\left\{ Cov\left(p_{i,n}\log(p_{i,n}),p_{j,n}\log(p_{j,n})\right)+\log(q_{i})\log(q_{j})Cov\left(p_{i,n},p_{j,n}\right)\right\} \nonumber \\
 &  & -4\underset{i<j}{\sum}\left\{ \log(q_{i})Cov\left(p_{i,n}\log(p_{i,n}),p_{j,n}\right)\right\} ,\label{eq:vd1}
\end{eqnarray}
respectively, where {\footnotesize{
\begin{eqnarray*}
Var(p_{i,n}) & = & \frac{n-1}{n^{2}(\alpha+1)},\\
Cov\left(p_{i,n},p_{j,n}\right) & = & \frac{-1}{n^{2}(\alpha+1)},\\
Var\left(p_{i,n}\log(p_{i,n})\right) & = & \frac{(\alpha/n)+1}{n(\alpha+1)}\left(\psi_{1}\left(\frac{\alpha}{n}+2\right)-\psi_{1}(\alpha+2)+\left[\psi\left(\frac{\alpha}{n}+2\right)-\psi(\alpha+2)\right]^{2}\right)\\
 &  & -\left(\psi(\frac{\alpha}{n}+1)-\psi(\alpha+1)\right)^{2},\\
Cov\left(p_{i,n}\log(p_{i,n}),p_{i,n}\right) & = & \frac{(\alpha/n)+1}{n(\alpha+1)}\left(\psi\left(\frac{\alpha}{n}+2\right)-\psi(\alpha+2)\right)-\frac{1}{n}\left(\psi\left(\frac{\alpha}{n}+1\right)-\psi(\alpha+1)\right),\\
Cov\left(p_{i,n}\log(p_{i,n}),p_{j,n}\right) & = & \frac{\alpha}{n^{2}(\alpha+1)}\left(\psi\left(\frac{\alpha}{n}+1\right)-\psi(\alpha+2)\right)-\frac{1}{n}\left(\psi\left(\frac{\alpha}{n}+1\right)-\psi(\alpha+1)\right),\\
Cov\left(p_{i,n}\log(p_{i,n}),p_{j,n}\log(p_{j,n})\right) & = & \frac{\alpha}{n^{2}(\alpha+1)}\left\{ \left(\psi\left(\frac{\alpha}{n}+1\right)-\psi(\alpha+2)\right)^{2}-\frac{\alpha\psi_{1}(\alpha+2)}{n^{2}(\alpha+1)}\right\} \\
 &  & -\left(\psi\left(\frac{\alpha}{n}+1\right)-\psi(\alpha+1)\right)^{2}
\end{eqnarray*}
}}and $\psi(\alpha)=\frac{d\ln\left(\Gamma(\alpha)\right)}{d\alpha}$
and $\psi_{1}(\alpha)=\frac{d^{2}\ln\left(\Gamma(\alpha)\right)}{d\alpha^{2}}=\frac{d\psi(\alpha)}{d\alpha}$
are called digamma and trigamma functions, respectively.\end{prop}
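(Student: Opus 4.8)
The plan is to reduce everything to moments of the Dirichlet weights and to extract those moments by differentiating the Dirichlet moment function with respect to its exponents. By Lemma \ref{Lemma} and \eqref{eq:d1},
\begin{equation*}
D_{KL}(P_n\parallel F)=\sum_{i=1}^n p_{i,n}\log(p_{i,n})-\sum_{i=1}^n \log(q_i)\,p_{i,n}=\sum_{i=1}^n Y_i,\qquad Y_i:=p_{i,n}\log(p_{i,n})-\log(q_i)\,p_{i,n}.
\end{equation*}
Since each $q_i$ is a deterministic constant, all the randomness lives in the weights $\mathbf{p}=(p_{1,n},\dots,p_{n,n})\sim\mathrm{Dir}(\alpha/n,\dots,\alpha/n)$ from the finite representation \eqref{eq:finite}. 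The single-coordinate marginal is then $p_{i,n}\sim\mathrm{Beta}(\alpha/n,\,\alpha-\alpha/n)$, and every pair $(p_{i,n},p_{j,n})$ has the three-cell law $\mathrm{Dir}(\alpha/n,\alpha/n,\alpha-2\alpha/n)$.

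The key device is the moment function. For a single coordinate,
\begin{equation*}
M(s):=E\big[p_{i,n}^{\,s}\big]=\frac{\Gamma(\alpha/n+s)\,\Gamma(\alpha)}{\Gamma(\alpha/n)\,\Gamma(\alpha+s)},
\end{equation*}
and for a pair,
\begin{equation*}
N(s,t):=E\big[p_{i,n}^{\,s}p_{j,n}^{\,t}\big]=\frac{\Gamma(\alpha)}{\Gamma(\alpha+s+t)}\,\frac{\Gamma(\alpha/n+s)}{\Gamma(\alpha/n)}\,\frac{\Gamma(\alpha/n+t)}{\Gamma(\alpha/n)}.
\end{equation*}
Because $\tfrac{d}{ds}p^{s}=p^{s}\log p$, each factor of $\log p_{i,n}$ is produced by one differentiation in the corresponding exponent, and the digamma and trigamma functions enter through $\tfrac{d}{ds}\log\Gamma=\psi$ and $\tfrac{d^2}{ds^2}\log\Gamma=\psi_1$. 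Concretely, $E[p_{i,n}\log p_{i,n}]=M'(1)$ and $E[p_{i,n}(\log p_{i,n})^2]=M''(2)$ evaluated from $M$, while the cross moments come from $E[p_{i,n}\log p_{i,n}\,p_{j,n}]=\partial_s N(1,1)$ and $E[p_{i,n}\log p_{i,n}\,p_{j,n}\log p_{j,n}]=\partial_s\partial_t N(1,1)$.

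For the mean I would use linearity: $E(D_{KL})=\sum_i M'(1)-\sum_i\log(q_i)\,E[p_{i,n}]$ with $E[p_{i,n}]=1/n$, leading to \eqref{eq:ed1}. For the variance I would expand
\begin{equation*}
\mathrm{Var}\Big(\sum_i Y_i\Big)=\sum_i \mathrm{Var}(Y_i)+2\sum_{i<j}\mathrm{Cov}(Y_i,Y_j)
\end{equation*}
and substitute $Y_i=p_{i,n}\log p_{i,n}-\log(q_i)p_{i,n}$ to split each term. The diagonal part gives $\mathrm{Var}(p_{i,n}\log p_{i,n})$, $(\log q_i)^2\mathrm{Var}(p_{i,n})$ and $-2\log(q_i)\mathrm{Cov}(p_{i,n}\log p_{i,n},p_{i,n})$; the off-diagonal part gives $\mathrm{Cov}(p_{i,n}\log p_{i,n},p_{j,n}\log p_{j,n})$, $\log(q_i)\log(q_j)\mathrm{Cov}(p_{i,n},p_{j,n})$ and, by exchangeability of the coordinates, the mixed terms in $\mathrm{Cov}(p_{i,n}\log p_{i,n},p_{j,n})$. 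Each of the six distinct moments is read off from $M$ or $N$ as above and turned into a covariance by subtracting the relevant product of means; $\mathrm{Var}(p_{i,n})$ and $\mathrm{Cov}(p_{i,n},p_{j,n})$ follow from the elementary second moments $M(2)$ and $N(1,1)$, reproducing \eqref{eq:vd1}.

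I expect the main obstacle to be the mixed covariance $\mathrm{Cov}(p_{i,n}\log p_{i,n},p_{j,n}\log p_{j,n})$. In $N(s,t)$ the shared normalizing factor $\Gamma(\alpha+s+t)$ couples the two coordinates, so in $\partial_s\partial_t\log N$ the one-variable terms $\log\Gamma(\alpha/n+s)$ and $\log\Gamma(\alpha/n+t)$ drop out and only the coupling term survives, contributing a cross trigamma $-\psi_1(\alpha+s+t)$ that links $i$ and $j$. Getting its sign and placement right, and then correctly assembling the $O(n)$ diagonal and $O(n^2)$ off-diagonal pieces against the $\log(q_i)$ weights, is the delicate, bookkeeping-heavy part; everything else is routine differentiation of Gamma functions and subtraction of mean products.
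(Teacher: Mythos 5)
Your proposal is correct and is, in substance, exactly the paper's own proof: the paper's entire argument is the observation that $\mathbf{p}\sim\mathrm{Dir}(\alpha/n,\ldots,\alpha/n)$, hence $p_{i,n}\sim\mathrm{Beta}\bigl(\tfrac{\alpha}{n},\alpha(1-\tfrac{1}{n})\bigr)$, after which ``all computations for the mean and variance simply follow,'' and your moment functions $M(s)$ and $N(s,t)$ differentiated in the exponents (including the surviving cross-trigamma term $-\psi_{1}(\alpha+s+t)$) are precisely the standard way of carrying those computations out. Two execution-level slips to fix: the identity needed for $\mathrm{Var}\bigl(p_{i,n}\log(p_{i,n})\bigr)$ is $E\bigl[(p_{i,n}\log p_{i,n})^{2}\bigr]=E\bigl[p_{i,n}^{2}(\log p_{i,n})^{2}\bigr]=M''(2)$, whereas what you wrote, $E\bigl[p_{i,n}(\log p_{i,n})^{2}\bigr]$, equals $M''(1)$; and since $M'(1)=\tfrac{1}{n}\bigl(\psi(\tfrac{\alpha}{n}+1)-\psi(\alpha+1)\bigr)$, summing over $i$ yields a prefactor of $1$ rather than the $n$ displayed in \eqref{eq:ed1} --- this mismatch (like the $1/n$ and $1/n^{2}$ factors absent from the subtracted mean products in the paper's covariance list) lies in the paper's displayed constants, which your method, faithfully executed, would correct rather than reproduce.
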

\begin{proof}
The proof is given in Appendix.\end{proof}
\begin{rem}
\label{remark d} Let $H$ and $F$ be two distributions defined on
the same space $\mathcal{X}$ and $P_{n}=\sum_{i=1}^{n}p_{i,n}\delta_{\theta_{i}}$
be the finite dimensional distribution as defined in \eqref{eq:finite},
in which $\theta_{1},\ldots,\theta_{n}$ are i.i.d. generated from
$H$ with corresponding order statistics $\theta_{(1)},\ldots,\theta_{(n)}$.
The mean and the variance of the Kullback-Leibler divergence \eqref{eq:d2}
can be obtained as
\begin{equation}
E(D_{KL}(F\parallel P_{n}))=-\mathcal{H}(\mathbf{q})-\left(\psi\left(\frac{\alpha}{n}\right)-\psi(\alpha)\right)\label{eq:ed2}
\end{equation}
and

\begin{equation}
Var(D_{KL}(F\parallel P_{n}))=\overset{n}{\underset{i=1}{\sum}}q_{i}^{2}\psi_{1}\left(\frac{\alpha}{n}\right)-\psi_{1}(\alpha),\label{eq:vd2}
\end{equation}
respectively.\end{rem}
\begin{proof}
The proof is given in Appendix.
\end{proof}

\section{Bayesian nonparametric chi-squared goodness-of-fit test}

The null hypothesis of the goodness-of-fit test is given as $H_{0}:F=F_{0}$
where $F$ is the true underlying distribution of the observed data
and $F_{0}$ is some specified distribution. Pearson\textquoteright{}s
chi-squared goodness of fit test proceeds by partitioning the sample
space into $k$ non-overlapping bins and comparing the observed counts
with the expected counts under the null hypothesis for each bin. Suppose
$X_{1},\ldots,X_{m}$ is a sample of size $m$ from the distribution
$F$. Let $O_{i}$ and $E_{i},\: i=1,\ldots,k$ denote the observed
counts and the expected counts under the hypothesized distribution
$F_{0}$ for bin $k$, respectively. The Pearson's goodness-of-fit
test statistic is defined as

\begin{equation}
X^{2}=\underset{i=1}{\overset{k}{\sum}}\frac{(O_{i}-E_{i})^{2}}{E_{i}}\label{eq:chi-s}
\end{equation}
and $X^{2}$ asymptotically converges to a chi-squared distribution
with $k-1$ degrees of freedom. To derive a counter part Bayesian
nonparametric test statistic similar to $X^{2}$, we consider a Dirichlet
process with parameters $\alpha$ and $H=F_{0}$ as a prior for the
true distribution of data, i.e., $X_{1},\ldots,X_{m}\sim P$ where
$P\thicksim DP(\alpha H)$. Then, given $X_{1},\ldots,X_{m}$, the
posterior distribution of $P$ is a Dirichlet process $P_{m}^{*}=\left(P\mid X_{1},\ldots,X_{m}\right)\thicksim DP(\alpha_{m}^{*}H_{m}^{*})$
where $\alpha_{m}^{*}$ and $H_{m}^{*}$ are as given in \eqref{eq:h&a}.
We carry out the test based on the chi-squared distance between the
posterior Dirichlet process $P_{m}^{*}$ and the hypothesized distribution
$F_{0}$. Note that for the large sample size, both the Pearson's
goodness-of-fit test and the likelihood ratio test (the Kullback-Leibler
distance) are asymptotically equivalent. For simplicity, we only consider
Pearson's goodness-of-fit test. Theorem \ref{prop 1} describes this
connection and the asymptotic distribution for the law of the posterior
distance for large sample size which is equivalent to the frequentist's
chi-squared test. This result follows from \citet{al2012new} and
\citet{lo1987large}, but we include a simple calculation to show
the asymptotic distribution of $D_{\alpha m}(A)=\sqrt{m}(P_{m}^{*}(A)-H_{m}^{*}(A))$
where $A\in\mathcal{X}.$ Notice that by having the partition $\{A,A^{c}\}$
and the definition of Dirichlet process,
\[
P_{m}^{*}(A)\sim Beta\left(\alpha_{m}^{*}H_{m}^{*}(A),\alpha_{m}^{*}H_{m}^{*}(A^{c})\right).
\]
Set $Y=P_{m}^{*}(A)$ and $v=H_{m}^{*}(A)$ where $P_{m}^{*}$ and
$H_{m}^{*}$ are defined in \eqref{eq:h&a}. Then, for $0<y<1,$ the
random variable $Y$ has the probability density function
\[
f(y)=\frac{\Gamma(m)}{\Gamma(mv)\Gamma(m(1-v))}y^{\alpha_{m}^{*}v-1}(1-y)^{\alpha_{m}^{*}(1-v)-1}.
\]
Thus, the probability density function of $Z=\sqrt{m}(Y-v)$ in its
support is

\begin{eqnarray}
f_{Z}(z) & = & \frac{\Gamma(m)}{\Gamma(mv)\Gamma(m(1-v))}\left(\frac{z}{\sqrt{m}}+v\right)^{\alpha_{m}^{*}v-1}\left(1-\frac{z}{\sqrt{m}}-v\right)^{\alpha_{m}^{*}(1-v)-1}.\label{eq:y}
\end{eqnarray}
By Scheffé's theorem (\citet{billingsley2013convergence}, page 29),
we need to show that
\[
f_{Z}(z)\rightarrow\frac{1}{\sqrt{2\pi\sigma^{2}}}\exp\left\{ -\frac{z^{2}}{2\sigma^{2}}\right\} ,
\]
 where $\sigma^{2}=F(A)(1-F(A))$. By Stirling\textquoteright{}s formula,
we have

\[
\Gamma\left(x\right)\thickapprox\sqrt{2\pi}x^{x-\frac{1}{2}}e^{-x}\:\textrm{as }x\rightarrow\infty,
\]
where we use the notation $f(x)\approx g(x)\,\textrm{as }x\rightarrow\infty$
if $\underset{x\rightarrow\infty}{\lim}\frac{f(x)}{g(x)}=1$. From
\eqref{eq:h&a}, as $m\rightarrow\infty$, $H_{m}^{*}\overset{a.s.}{\rightarrow}F$
and $\alpha_{m}^{*}=\alpha+m\approx m$. Then, the equation \eqref{eq:y}
can be rewritten as

\begin{eqnarray*}
f_{Z}(z) & = & \frac{\Gamma\left(m\right)}{\Gamma\left(mv\right)\Gamma\left(m\left(1-v\right)\right)}\left(\frac{z}{\sqrt{m}}+v\right)^{mv-1}\left(1-\frac{z}{\sqrt{m}}-v\right)^{m(1-v)-1},
\end{eqnarray*}
where $v=F(A)$. Then,

\begin{eqnarray}
\underset{m\rightarrow\infty}{\lim}f_{Z}(z) & = & \frac{1}{\sqrt{2\pi}}\underset{m\text{\textrightarrow\ensuremath{\infty}}}{\lim}\left\{ \frac{\left(\frac{z}{\sqrt{m}}+v\right)^{mv-1}\left(1-\frac{z}{\sqrt{m}}-v\right)^{m(1-v)-1}}{v^{mv-\frac{1}{2}}(1-v)^{m(1-v)-\frac{1}{2}}}\right\} \nonumber \\
 & = & \frac{1}{\sqrt{2\pi v(1-v)}}\underset{m\text{\textrightarrow\ensuremath{\infty}}}{\lim}\left\{ \frac{\left(\frac{z}{\sqrt{m}}+v\right)^{mv-1}\left(1-\frac{z}{\sqrt{m}}-v\right)^{m(1-v)-1}}{v^{mv-1}(1-v)^{m(1-v)-1}}\right\} \nonumber \\
 & = & \frac{1}{\sqrt{2\pi v(1-v)}}\underset{m\text{\textrightarrow\ensuremath{\infty}}}{\lim}\left\{ \left(1+\frac{z}{\sqrt{m}v}\right)^{mv-1}\left(1-\frac{z}{\sqrt{m}(1-v)}\right)^{m(1-v)-1}\right\} \nonumber \\
 & = & \frac{1}{\sqrt{2\pi v(1-v)}}\exp\left\{ \underset{m\text{\textrightarrow\ensuremath{\infty}}}{\lim}m\ln\left(\eta_{m}\right)\right\} ,\label{eq:fy}
\end{eqnarray}
where

\[
\eta_{m}=\left(1+\frac{z}{\sqrt{m}v}\right)^{v}\left(1-\frac{z}{\sqrt{m}(1-v)}\right)^{1-v}.
\]
Therefore,

\begin{eqnarray*}
\underset{m\text{\textrightarrow\ensuremath{\infty}}}{\lim}m\ln\left(\eta_{m}\right) & = & \underset{m\text{\textrightarrow\ensuremath{\infty}}}{\lim}\frac{1}{1/m}\left\{ v\ln\left(1+\frac{z}{\sqrt{m}v}\right)+(1-v)\ln\left(1-\frac{z}{\sqrt{m}(1-v)}\right)\right\} .
\end{eqnarray*}
By applying the L'Hospital's rule, we obtain

\begin{eqnarray}
\underset{m\text{\textrightarrow\ensuremath{\infty}}}{\lim}m\ln\left(\eta_{m}\right) & = & \underset{m\text{\textrightarrow\ensuremath{\infty}}}{\lim}(-m^{2})\left\{ \frac{-\frac{vz}{2vm^{3/2}}}{\left(1+\frac{z}{\sqrt{m}v}\right)}+\frac{\frac{(1-v)z}{2(1-v)m^{3/2}}}{\left(1-\frac{z}{\sqrt{m}(1-v)}\right)}\right\} \nonumber \\
 & = & \underset{m\text{\textrightarrow\ensuremath{\infty}}}{\lim}\frac{m}{2}\left\{ \frac{vz}{\sqrt{m}v+z}-\frac{(1-v)z}{\sqrt{m}(1-v)-z}\right\} \nonumber \\
 & = & \underset{m\text{\textrightarrow\ensuremath{\infty}}}{\lim}\frac{m}{2}\left\{ \frac{-z^{2}}{(\sqrt{m}v+z)(\sqrt{m}(1-v)-z)}\right\} \nonumber \\
 & = & \frac{-z^{2}}{2v(1-v)}.\label{eq:lim}
\end{eqnarray}
Substituting \eqref{eq:lim} in \eqref{eq:fy} completes the proof
of normality of $D_{\alpha m}(A)=\sqrt{m}(P_{m}^{*}(A)-H_{m}^{*}(A))$.
A similar method proves that as $m\rightarrow\infty$, for any partition
$\{A_{1},\ldots,A_{k}\}$ of the space $\mathcal{X},$

\[
(D_{\alpha m}(A_{1}),D_{\alpha m}(A_{2}),\ldots,D_{\alpha m}(A_{k}))\overset{d}{\rightarrow}(B_{F}(A_{1}),B_{F}(A_{2}),\ldots,B_{F}(A_{k})),
\]
where $B_{F}$ is the Brownian bridge.
\begin{rem}
A Gaussian process $\{B_{F}(A),A\in\mathcal{X}\}$ is called a Brownian
bridge if $E(B_{F}(A))=0$ and $Cov(B_{F}(A_{i}),B_{F}(A_{j}))=F(A_{i}\cap A_{j})-F(A_{i})\cap F(A_{j}),$
where $A_{i},A_{j}\in\mathcal{X}$. Now we can imply the following
Lemma.\end{rem}
\begin{lem}
\label{Lemm}Let $X_{1},\ldots,X_{m}$ be a random sample from the
distribution $H$. If $P_{m}^{*}$ is the Dirichlet process posterior
given $X_{1},\ldots,X_{m}$. Then, as $m\rightarrow\infty$,
\[
D_{\alpha m}(\cdot)=\sqrt{m}(P_{m}^{*}(\cdot)-H_{m}^{*}(\cdot))\overset{d}{\rightarrow}B_{F}(\cdot).
\]

\end{lem}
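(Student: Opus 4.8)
The plan is to prove this as a functional central limit theorem, that is, weak convergence of the random element $D_{\alpha m}$ in the Skorokhod space $D[0,1]$, rather than mere convergence of its value at a single fixed set $A$. It is convenient to reduce to the uniform case by the probability integral transform: writing $A=(-\infty,F^{-1}(t)]$ and $\tilde D_m(t)=D_{\alpha m}\bigl((-\infty,F^{-1}(t)]\bigr)$ for $t\in[0,1]$, the claim $D_{\alpha m}(\cdot)\overset{d}{\to}B_F(\cdot)$ is equivalent to $\tilde D_m\Rightarrow B^{0}$ in $D[0,1]$, where $B^{0}$ is the standard Brownian bridge and $B_F=B^{0}\circ F$. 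By the standard characterization of weak convergence in $D[0,1]$ (\citet{billingsley2013convergence}), it then suffices to establish two things: (i) convergence of the finite-dimensional distributions, and (ii) tightness of the sequence $\{\tilde D_m\}$.

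For (i), the computation preceding the statement already yields the one-dimensional limit, and the indicated ``similar method'' extends it to any finite partition $\{A_1,\ldots,A_k\}$. The essential point is that, conditionally on $X_1,\ldots,X_m$, the vector $(P_m^*(A_1),\ldots,P_m^*(A_k))$ is Dirichlet with parameters $\alpha_m^* H_m^*(A_j)$, so that $D_{\alpha m}(A_j)$ has conditional mean zero, conditional variance $m\,H_m^*(A_j)(1-H_m^*(A_j))/(\alpha_m^*+1)$, and conditional covariances $-m\,H_m^*(A_i)H_m^*(A_j)/(\alpha_m^*+1)$ for disjoint blocks. Since the sample is drawn from $F=H$, we have $H_m^*\overset{a.s.}{\to}F$ and $\alpha_m^*/m\to1$, so these converge to $F(A_j)(1-F(A_j))$ and $-F(A_i)F(A_j)$. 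Hence the limiting finite-dimensional laws are Gaussian with mean zero and covariance $F(A_i\cap A_j)-F(A_i)F(A_j)$, which are precisely the finite-dimensional distributions of $B_F$.

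For (ii), I would invoke the moment criterion for tightness in $D[0,1]$: it is enough to produce the increment bound over three ordered points
\[
E\Bigl[\bigl(D_{\alpha m}(s,t]\bigr)^{2}\bigl(D_{\alpha m}(t,u]\bigr)^{2}\Bigr]\le C\,\bigl(F(u)-F(s)\bigr)^{2},\qquad s\le t\le u,
\]
uniformly in $m$. To obtain this I would condition on the data: on the partition $\{(s,t],(t,u],\text{rest}\}$ the posterior block probabilities are Dirichlet, so the conditional fourth-order moment $E\bigl[(P_m^*(s,t]-H_m^*(s,t])^2(P_m^*(t,u]-H_m^*(t,u])^2\mid\text{data}\bigr]$ can be evaluated exactly from the Dirichlet moment formula $E[\prod_j P_j^{n_j}]=\tfrac{\Gamma(\alpha_m^*)}{\Gamma(\alpha_m^*+\sum_j n_j)}\prod_j\tfrac{\Gamma(\alpha_m^* H_m^*(A_j)+n_j)}{\Gamma(\alpha_m^* H_m^*(A_j))}$. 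Multiplying by $m^2$ and using $\alpha_m^*\approx m$, the leading term is of order $H_m^*(s,t]\,H_m^*(t,u]\le(H_m^*(s,u])^2$, and the lower-order terms are dominated by the same quantity; taking expectation over the data and using $H_m^*(s,u]\to F(u)-F(s)$ yields the required bound.

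The main obstacle is carrying out this fourth-moment estimate cleanly while simultaneously handling the two sources of randomness, namely the posterior draw and the random base measure $H_m^*$, and verifying that the bound is genuinely uniform in $m$ (including near the endpoints, where one also checks the boundary condition $\tilde D_m(0)=\tilde D_m(1)=0$). The Dirichlet moment algebra is routine but lengthy; the real care lies in confirming that every cross term, after multiplication by $m^2$, stays bounded by a constant multiple of $(F(u)-F(s))^2$ with no blow-up as the block widths shrink. As an alternative to a fully self-contained argument, this tightness, and hence the whole weak-convergence conclusion, may be imported from the Bayesian bootstrap functional limit theorems of \citet{lo1987large} and \citet{al2012new}, which treat exactly the posterior Dirichlet process normalized by its mean.
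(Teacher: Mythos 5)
Your route is genuinely different from, and more ambitious than, the paper's. The paper never proves the lemma as a functional limit theorem: it gives a one-dimensional computation --- $P_m^*(A)$ is Beta-distributed, so the density of $\sqrt{m}(P_m^*(A)-H_m^*(A))$ is written out explicitly and shown, via Stirling's formula and L'Hospital's rule, to converge to a normal density, whence convergence in distribution by Scheff\'e's theorem --- then asserts that ``a similar method'' gives convergence of $(D_{\alpha m}(A_1),\ldots,D_{\alpha m}(A_k))$ for any finite partition, and for a detailed proof simply refers to Al-Labadi and Zarepour (2012) and Lo (1987). Tightness is never addressed, and indeed the only thing the paper consumes downstream, in the proof of Theorem \ref{prop 1}, is the finite-dimensional statement \eqref{eq:y1} over one fixed partition. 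Your proposal instead treats the statement as honest weak convergence in $D[0,1]$ (fidis plus tightness), which is the correct reading of $D_{\alpha m}(\cdot)\overset{d}{\rightarrow}B_F(\cdot)$ as a process-level claim; your conditional-Dirichlet moment computation correctly identifies the limiting covariance $F(A_i\cap A_j)-F(A_i)F(A_j)$. One caveat on part (i): convergence of conditional means and covariances does not by itself yield Gaussianity of the finite-dimensional limits --- for that you still need a distributional argument such as the paper's Scheff\'e computation or a normalized-gamma CLT representation of the Dirichlet vector, which you implicitly lean on.

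There is one concrete soft spot in your tightness sketch. After the conditional bound $m^2E[\,\cdot\mid\text{data}]\le C\,H_m^*(s,t]\,H_m^*(t,u]$, you relax the product to $(H_m^*(s,u])^2$ and then appeal to $H_m^*(s,u]\rightarrow F(u)-F(s)$. Almost sure convergence cannot deliver a bound uniform in $m$ and in the increment, and the moment bound for the square in fact fails for small increments: $E[(H_m^*(s,u])^2]$ contains the empirical-variance term $F(s,u]\bigl(1-F(s,u]\bigr)/m$, which is not $O\bigl((F(u)-F(s))^2\bigr)$ when $F(s,u]\lesssim 1/m$. The repair is to keep the product of increments over the two \emph{disjoint} intervals and take expectations exactly: disjoint empirical increments are negatively correlated, so $E\bigl[F_m(s,t]\,F_m(t,u]\bigr]=(1-1/m)\,F(s,t]\,F(t,u]\le F(s,t]\,F(t,u]$, giving $E\bigl[H_m^*(s,t]\,H_m^*(t,u]\bigr]\le C\,F(s,t]\,F(t,u]\le C\bigl(F(u)-F(s)\bigr)^2$ uniformly in $m$, which is exactly Billingsley's criterion. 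With that fix --- together with the check, which you correctly flag as the delicate point, that every term of the Dirichlet fourth mixed central moment carries the factor $\mu_1\mu_2$, so that multiplication by $m^2\approx(\alpha_m^*)^2$ leaves everything $O(\mu_1\mu_2)$ --- your argument closes. Your fallback of importing the conclusion from Lo (1987) and Al-Labadi and Zarepour (2012) is, in effect, precisely what the paper itself does, so even the unfinished version of your proof matches the paper's actual level of rigor while your completed version would exceed it.
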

For a detailed proof similar to what we presented here, see \citet{al2012new}.
Also, see \citet{james2008large}, \citet{ghosal2010dirichlet} and
\citet{lo1987large}. \citet{al2012new} proved that as $\alpha\rightarrow\infty$,
$D_{\alpha}(\cdot)=\sqrt{\alpha}(P(\cdot)-H(\cdot))\overset{d}{\rightarrow}B_{H}(\cdot)$.
Theorem \ref{prop 1} describes the asymptotic distribution of the
posterior distance for a large sample size.
\begin{thm}
\label{prop 1}Suppose $X_{1},\ldots,X_{m}$ is a random sample from
a distribution $F$ on sample space $\mathcal{X}$. Let $P\thicksim DP(\alpha H)$
and $P_{m}^{*}=\left(P\mid X_{1},\ldots,X_{m}\right)\thicksim DP(\alpha_{m}^{*}H_{m}^{*})$,
where $\alpha_{m}^{*}=\alpha+m$ and $H_{m}^{*}=\frac{\alpha}{\alpha+m}H+\frac{m}{\alpha+m}\frac{\sum_{i=1}^{m}\delta_{X_{i}}}{m}$.
Let $D_{KL}(P_{m}^{*}\parallel H_{m}^{*})$ denotes the Kullback-Leibler
distance between $P_{m}^{*}$ and $H_{m}^{*}$. For any finite partition
$\{A_{1},\ldots,A_{k}\}$ of $\mathcal{X}$, define
\end{thm}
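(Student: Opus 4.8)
The plan is to reduce the statement to the classical fact that a Gaussian quadratic form carrying the multinomial covariance structure is chi-squared distributed, using the Brownian-bridge limit already established in Lemma \ref{Lemm}.

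First I would record the multivariate version of Lemma \ref{Lemm}, which is stated just before the theorem: for the fixed partition $\{A_1,\ldots,A_k\}$, the vector
\[
\mathbf{D}_m=\left(D_{\alpha m}(A_1),\ldots,D_{\alpha m}(A_k)\right)
\]
converges in distribution, as $m\rightarrow\infty$, to the centered Gaussian vector $\mathbf{B}=(B_F(A_1),\ldots,B_F(A_k))$ whose covariance is $\Sigma_{ij}=F(A_i\cap A_j)-F(A_i)F(A_j)$. Because $\{A_1,\ldots,A_k\}$ is a partition, $A_i\cap A_j=\emptyset$ for $i\neq j$, so writing $p_i=F(A_i)$ (with $\sum_{i=1}^k p_i=1$) the covariance collapses to the multinomial form $\Sigma=\mathrm{diag}(\mathbf{p})-\mathbf{p}\mathbf{p}^\top$, with diagonal entries $p_i(1-p_i)$ and off-diagonal entries $-p_ip_j$.

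Next I would handle the random denominators. By the consistency noted right after \eqref{eq:h&a}, $H_m^*(A_i)\overset{a.s.}{\rightarrow}F(A_i)=p_i$ for each $i$, so Slutsky's theorem lets me replace $H_m^*(A_i)$ by $p_i$ in the Pearson-type statistic $\sum_{i=1}^k D_{\alpha m}(A_i)^2/H_m^*(A_i)$ without changing the limit. The remaining task is the purely distributional claim: if $\mathbf{Z}\sim N_k\!\left(\mathbf{0},\,\mathrm{diag}(\mathbf{p})-\mathbf{p}\mathbf{p}^\top\right)$ then $\sum_{i=1}^k Z_i^2/p_i\sim\chi^2_{k-1}$. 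I would prove this by the standard rescaling $W_i=Z_i/\sqrt{p_i}$, under which $\mathrm{Cov}(\mathbf{W})=I_k-\mathbf{u}\mathbf{u}^\top$ where $\mathbf{u}=(\sqrt{p_1},\ldots,\sqrt{p_k})^\top$ satisfies $\|\mathbf{u}\|=1$. The matrix $I_k-\mathbf{u}\mathbf{u}^\top$ is an orthogonal projection of rank $k-1$, and for a Gaussian vector whose covariance is a rank-$r$ projection the quadratic form $\mathbf{W}^\top\mathbf{W}=\sum_i W_i^2$ is $\chi^2_r$; here $r=k-1$.

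Finally I would assemble the pieces via the continuous mapping theorem: the map $\mathbf{z}\mapsto\sum_{i=1}^k z_i^2/p_i$ is continuous, so $\sum_{i=1}^k D_{\alpha m}(A_i)^2/p_i\overset{d}{\rightarrow}\sum_{i=1}^k B_F(A_i)^2/p_i\sim\chi^2_{k-1}$, and the Slutsky replacement above yields the same limit for the statistic carrying the $H_m^*(A_i)$ denominators. I expect the main obstacle to be the rank deficiency of $\Sigma$: the limiting Gaussian is degenerate, which is precisely what forces the degrees of freedom down from $k$ to $k-1$, and the cleanest way to track this is the projection-matrix identification in the third step rather than attempting to invert $\Sigma$ directly. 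If the theorem is instead phrased through $2\alpha_m^* D_{KL}(P_m^*\parallel H_m^*)$ restricted to the partition, I would insert one extra step, namely a second-order Taylor expansion of the logarithms appearing in \eqref{eq:d1}, to show this equals the Pearson form plus a term that is $o_p(1)$; the argument above then applies verbatim, consistent with the asymptotic equivalence of the two statistics remarked upon before the theorem.
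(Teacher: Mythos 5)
Your proof is correct, but it takes a genuinely different linear-algebra route from the paper's. The paper also starts from Lemma \ref{Lemm} and the multinomial covariance $\Sigma$ with $\sigma_{ii}=F(A_i)(1-F(A_i))$, $\sigma_{ij}=-F(A_i)F(A_j)$, but it then handles the degeneracy by dropping the $k$th cell: it passes to the first $k-1$ coordinates, asserts that the Pearson sum equals the quadratic form $(\alpha+m)(\mathbf{Y}_m^*-\mathbf{v}_m^*)^{T}(\Sigma^{*})^{-1}(\mathbf{Y}_m^*-\mathbf{v}_m^*)$ with $\Sigma^{*}$ the invertible upper-left $(k-1)\times(k-1)$ block, and whitens with $(\Sigma^{*})^{-1/2}$ to get $\chi^2_{(k-1)}$. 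You instead keep all $k$ coordinates, rescale by $\sqrt{p_i}$, and identify the limiting covariance as the rank-$(k-1)$ orthogonal projection $I_k-\mathbf{u}\mathbf{u}^{\top}$, invoking the fact that a Gaussian quadratic form whose covariance is a projection is chi-squared with the projection's rank as degrees of freedom. Your route buys three things the paper leaves implicit: it avoids the unverified algebraic identity between the Pearson sum and the $(\Sigma^{*})^{-1}$ quadratic form (the paper even first writes the form with the non-existent $\Sigma^{-1}$ in \eqref{eq:D} before correcting course), it makes the replacement of the random denominators $H_m^{*}(A_i)$ by $F(A_i)$ an explicit Slutsky step rather than an unstated substitution, and it addresses the claim $2\alpha_m^{*}D_{KL}(P_m^{*}\parallel H_m^{*})\simeq\mathcal{D}(P_m^{*},H_m^{*})$ via a second-order Taylor expansion, which the paper's appendix proof omits entirely (it is justified only by a textual remark on the asymptotic equivalence of the likelihood-ratio and Pearson statistics). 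What the paper's approach buys in exchange is more elementary machinery, only invertibility of $\Sigma^{*}$ rather than the projection-covariance characterization, matching the classical textbook proof of Pearson's theorem; one small point you should still make explicit is that Lemma \ref{Lemm} is normalized by $\sqrt{m}$ while the statistic carries $\alpha_m^{*}=\alpha+m$, which is harmless since $(\alpha+m)/m\rightarrow1$.
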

\begin{equation}
\mathcal{D}(P_{m}^{*},H_{m}^{*}):=\alpha_{m}^{*}\underset{i=1}{\overset{k}{\sum}}\frac{(P_{m}^{*}(A_{i})-H_{m}^{*}(A_{i}))^{2}}{H_{m}^{*}(A_{i})}.\label{eq:chi1}
\end{equation}
Then, as $m\rightarrow\infty,$ we have

\[
2\alpha_{m}^{*}D_{KL}(P_{m}^{*}\parallel H_{m}^{*})\simeq\mathcal{D}(P_{m}^{*},H_{m}^{*})\overset{d}{\rightarrow}\chi_{(k-1)}^{2}.
\]

\begin{proof}
See the Appendix.
\end{proof}
Note that as the sample size $m$ increases, $H_{m}^{*}\overset{a.s.}{\rightarrow}F$
and therefore the posterior Dirichlet process $P_{m}^{*}$ converges
to the true underlying distribution $F$ of the observed data $X_{1},\ldots,X_{m}$.
In our methodology, we compute the observed probability for bin $A_{i},\, i=1,\ldots,k$
of the partition $\{A_{1},\ldots,A_{k}\}$ by calculating the posterior
probability $P_{m}^{*}(A_{i}),\, i=1,\ldots,k$. Notice that in our
Bayesian paradigm, we need to embed our prior information in our test
statistic. In other words, the base distribution and the concentration
parameter plays the role of the prior knowledge. Moreover, we do not
count the observed frequencies in each bin. Instead, we calculate
the exact posterior probability for each bin. Then, the $X^{2}$ distance
in \eqref{eq:chi1} compares the posterior probabilities with the
hypothesized ones. Additionally, there is no need to apply the asymptotic
distribution as we know the exact distribution of the $X^{2}$ distance
via a Monte Carlo simulation. Also, There are many discussions for
choosing the number of bins in the literature and different criterion
are suggested by various authors. See, for example, \citet{koehler1990chi},
\citet{mann1942choice}, \citet{williams1950choice}, \citet{watson1957chi2},
\citet{hamdan1963number}, \citet{dahiya1973many}, \citet{gvanceladze1979tests},
\citet{best1981two}, \citet{quine1985efficiencies} and \citet{johnson2004bayesian}.
In the following subsections, we first use the distance \eqref{eq:chi1}
to find an appropriate concentration parameter for the Dirichlet process.
Then, we carry out a Bayesian nonparametric chi-squared goodness-of-fit
test. We also extend our method to present a Bayesian nonparametric
test of independence. The described methods will be illustrated by
some examples in Section 6.

\subsection{Selection of the concentration parameter of Dirichlet process}

A challenging question in Bayesian nonparametric is to determine $\alpha$,
the concentration parameter of the prior. To suggest an appropriate
concentration parameter $\alpha$, fix $c$ and $q$ such that

\begin{equation}
Pr(\mathcal{D}(P,F_{0})\leq c)=q,\label{eq:prior-p}
\end{equation}
where
\[
\mathcal{D}=\mathcal{D}(P,F_{0})=\alpha\underset{i=1}{\overset{k}{\sum}}\frac{(P(A_{i})-F_{0}(A_{i}))^{2}}{F_{0}(A_{i})}.
\]
Throughout this paper, $\mathcal{D}=\mathcal{D}(P,F_{0})$ denotes
the prior distance. Also, let $\mathcal{D}^{*}=\mathcal{D}(P_{m}^{*},F_{0})$
stands for the posterior distance as given in \eqref{eq:chi1}, replacing
$H_{m}^{*}$ by $F_{0}$. We can approximate the distribution of the
prior distance $\mathcal{D}=\mathcal{D}(P,F_{0})$ by the empirical
distribution of $N$ randomly generated values from $\mathcal{D}$.
Thus, \eqref{eq:prior-p} can be approximated by the proportion of
$\mathcal{D}$ values that are less than or equal to $c$. We start
with an initial value of $\alpha$ and then we compute the probability
\eqref{eq:prior-p}. If the probability is close to the value of $q$,
we choose $\alpha$, otherwise, we repeat this procedure by increasing
or decreasing the value of $\alpha$ to reach the value of $q.$ The
results of a simulation study for an illustrated example are summarized
in Table \ref{tab:1} in Section 6.

\subsection{Goodness-of-fit test }

Suppose $X_{1},\ldots,X_{m}$ is a random sample from a distribution
$F$. In order to test the null hypothesis $H_{0}:F=F_{0}$, we place
the Dirichlet process prior with parameters $\alpha$ and $F_{0}$
on $F$. Then, since under the null hypothesis, the true distribution
of data is $F_{0}$, we calculate the distance between the Dirichlet
process prior and $F_{0}$. The appropriate concentration parameter
$\alpha$ of the Dirichlet process can be calculated by the method
explained in Subsection 4.1. We follow the approach of \citet{swartz1999nonparametric}.
That is, for a fixed value of $q$ and $c$, we obtain $\alpha$ by
\eqref{eq:prior-p}. Having $\alpha$, we generate a random sample
of size $N$ from the Dirichlet process posterior with parameters
$\alpha_{m}^{*}$ and $H_{m}^{*}$ as given earlier to get $N$ random
samples of $\mathcal{D}^{*}=\mathcal{D}(P_{m}^{*},F_{0})$ as given
in Theorem \ref{prop 1}. The distribution of $\mathcal{D}^{*}$ can
be estimated by the empirical distribution of $\mathcal{D}^{*}$ values.
Hence, the posterior probability $Pr(\mathcal{D}(P_{m}^{*},F_{0})\leq c)$
can be estimated by the proportion of $\mathcal{D}^{*}$ which are
less than or equal to $c$. Here, our decision making is based on
the comparison of the posterior probability and the prior probability
$q$, where $q$ represents the prior belief that the underlying distribution
$F$ is practically equivalent to $F_{0}$. Usually $q=0.5$ is considered.
If the empirical posterior probability $Pr(\mathcal{D}(P_{m}^{*},F_{0})\leq c)$
is less than $q$, we reject the null hypothesis, otherwise there
is no evidence to reject the null hypothesis.

Similar to the frequentist's chi-squared goodness-of-fit test, we
can also generalized the test to a family of distributions. Now, consider
the null hypothesis $H_{0}:F=F_{\theta}$ for some $\theta\in\Theta.$
Therefore, the true underlying distribution $F$ is a member of a
family of distributions indexed by the parameter $\theta.$ Our approach
for this case is similar to the simple hypothesis with the addition
of a prior distribution $\pi(\theta)$ on $\theta$. Thus, the distance
$\mathcal{D}(P_{m}^{*},F_{\theta})$ depends on the unknown parameter
$\theta$. In order to conduct the test, we first generate a random
sample from the posterior distribution of $\theta$ given $X_{1},\ldots,X_{m}$
that is given as

\begin{equation}
g(\theta\mid X_{1},\ldots,X_{m})\propto\left(\overset{m}{\underset{i=1}{\prod}}f_{\theta}(x_{i})\right)\pi(\theta),\label{eq:post}
\end{equation}
where $f_{\theta}(x)$ is the density function corresponding to $F_{\theta}$.
By having a specified $c$ and $q$, we find the parameter $\alpha$
such that $Pr(\mathcal{D}(P,F_{\widehat{\theta}})\leq c)=q,$ where
$\widehat{\theta}=E(\theta)$. Then, we generate a random sample $\theta_{i}^{*},\, i=1,\ldots,M$
from the posterior distribution $g(\theta\mid X_{1},\ldots,X_{m})$.
We obtain $\theta_{Min}=\arg\underset{\theta_{i}^{*}}{\min}D(P_{m}^{*},F_{\theta_{i}^{*}}),\, i=1,\ldots,M$,
where $P_{m}^{*}$ is the posterior Dirichlet process with the base
distribution $H_{\theta_{i}^{*}}^{*}$ as given in \eqref{eq:h&a}
with $H$ replaced by $H_{\theta_{i}^{*}}$. We then generate a sample
of size $N$ from $\mathcal{D}(P_{m}^{*},F_{\theta_{Min}})$. Similar
to the case of testing for the simple hypothesis, the decision is
made by comparing the posterior probability $Pr(\mathcal{D}(P_{m}^{*},F_{\theta_{Min}})\leq c)$
and $q$. Note that in the case of a non-standard distribution in
\eqref{eq:post}, in order to sample from the posterior distribution,
we need to apply some specialized techniques such as Metropolis-Hastings
algorithm. In Section 6, some examples with simulation study are illustrated
for the simple hypothesis $H_{0}:F=N(0,1)$ and the null hypothesis
$H_{0}:F=\exp(\theta)$ with a Gamma (1.7, 2550) prior distribution
for $\theta$.

\section{Bayesian nonparametric chi-squared test of independence}

Here, we describe a Bayesian nonparametric chi-squared test of independence
of two random variables. The null hypothesis of the chi-squared test
of independence is given as $H_{0}:F_{X,Y}(x,y)=F_{X}(x)F_{Y}(y)$
against the alternative $H_{0}:F_{X,Y}(x,y)\neq F_{X}(x)F_{Y}(y)$
and hence it examines whether there is a significant relationship
between two random variables $X$ and $Y$. Suppose $\{A_{j}\}_{j=1,\ldots,r}$
is a partition of the space $\mathcal{X}$ of the random variable
$X$ and $\{B_{k}\}_{k=1,\ldots,s}$ is a partition of the space $\mathcal{Y}$
of the random variable $Y$, i.e., $\mathcal{X}=\underset{j=1}{\overset{r}{\cup}}A_{j}$
and $\mathcal{Y}=\underset{k=1}{\overset{s}{\cup}}B_{k}$. Let $(X_{l},Y_{l})\overset{i.i.d}{\thicksim}F(x,y),\, l=1,\ldots,m$
be the sample data and $H$ be a bivariate distribution. Then, the
Dirichlet process posterior with parameters $H_{m}^{*}$ and $\alpha_{m}^{*}$
is written as $P_{m}^{*}=\underset{i=1}{\overset{\infty}{\sum}}p_{i}^{\left(m\right)}\delta_{(X_{i}^{*},Y_{i}^{*})}$,
where $p_{i}^{(m)}$ is as given in \eqref{eq:pFer}, $\alpha$ is
replaced by $\alpha_{m}^{*}$ and $(X_{i}^{*},Y_{i}^{*}),\, i=1,\ldots,n$
are generated from $H_{m}^{*}=\frac{\alpha}{\alpha+m}H+\frac{m}{\alpha+m}\frac{\sum_{i=1}^{m}\delta_{(X_{i},Y_{i})}}{m}$.
In our new approach, we compute the observed probability at level
$j$ of the random variable $X$ and at level $k$ of the random variable
$Y$ by $P_{m}^{*}(A_{j}\times B_{k})$ and the corresponding expected
probability is computed as $P_{m}^{*}(A_{j}\times\mathcal{Y})P_{m}^{*}(\mathcal{X}\times B_{k})$,
where
\begin{equation}
P_{m}^{*}(A_{j}\times B_{k})=\underset{i=1}{\overset{\infty}{\sum}}p_{i}^{(m)}\delta_{(X_{i}^{*},Y_{i}^{*})}(A_{j}\times B_{k})\label{eq:p1}
\end{equation}
and
\begin{eqnarray}
 &  & P_{m}^{*}(A_{j}\times\mathcal{Y})=\underset{i=1}{\overset{\infty}{\sum}}p_{i}^{(m)}\delta_{(X_{i}^{*},Y_{i}^{*})}(A_{j}\times\mathcal{Y})=\underset{i=1}{\overset{\infty}{\sum}}p_{i}^{(m)}\delta_{X_{i}^{*}}(A_{j})\nonumber \\
 &  & P_{m}^{*}(\mathcal{X}\times B_{k})=\underset{i=1}{\overset{\infty}{\sum}}p_{i}^{(m)}\delta_{(X_{i}^{*},Y_{i}^{*})}(\mathcal{X}\times B_{k})=\underset{i=1}{\overset{\infty}{\sum}}p_{i}^{(m)}\delta_{Y_{i}^{*}}(B_{k}).\label{eq:p2}
\end{eqnarray}
 Then, test statistic is given as
\begin{equation}
\mathcal{D}^{*}=\alpha_{m}^{*}\underset{k=1}{\overset{s}{\sum}}\underset{j=1}{\overset{r}{\sum}}\frac{(P_{m}^{*}(A_{j}\times B_{k})-P_{m}^{*}(A_{j}\times\mathcal{Y})P_{m}^{*}(\mathcal{X}\times B_{k}))^{2}}{P_{m}^{*}(A_{j}\times\mathcal{Y})P_{m}^{*}(\mathcal{X}\times B_{k})}\label{eq:d2-1}
\end{equation}
which asymptotically converges to  $\chi_{(r-1)\times(s-1)}^{2}$.
In order to carry out the test, we proceed a similar process as explained
in Section 4 for the goodness-of-fit test. We generate a random sample
of size $N$ from the prior distance $\mathcal{D}$, where $\mathcal{D}$
is computed by \eqref{eq:d2-1} replacing $\alpha_{m}^{*}$ by $\alpha$
and the Dirichlet process posterior $P_{m}^{*}$ by the Dirichlet
process prior $P$. By having a fixed value $c$ and a fixed probability
$q$, an appropriate concentration parameter $\alpha$ is obtained
by the equation $Pr(\mathcal{D}\leq c)=q$. Then, by generating a
sample of size $N$ from $\mathcal{D}^{*}$, we can approximate the
distribution of $\mathcal{D}^{*}$ by the empirical distribution of
$\mathcal{D}^{*}$ values. Our decision is made by comparing the probabilities
$Pr(\mathcal{D}^{*}\leq c)$ and $q$ and we reject the null hypothesis
if $Pr(\mathcal{D}^{*}\leq c)$ is less than $q$. An illustrative
example with a simulation study is discussed in Section 6.

\section{Simulation study}

This section provides some examples with simulation studies for the
Bayesian nonparametric tests described in Section 4 and 5. For all
the simulations, we use the finite sum representation to approximate
the Dirichlet process as given in \eqref{eq:pd}.
\begin{example}
We consider a Dirichlet process with the base distribution $H=N(0,1)$
and $n=2000$ terms in the finite sum representation \eqref{eq:pd}.
We partition the space into $k=7$ bins. Table \ref{tab:1} represents
the probability \eqref{eq:prior-p} when $F_{0}=N(0,1).$ The probabilities
are computed for various values of $\alpha$ and $c$ and for a simulation
of size $N=2000.$ As the Table \ref{tab:1} shows, for example, if
we set $q=0.48$ and $c=3$, $\alpha=10$ is an appropriate concentration
parameter.
\end{example}
\noindent
\begin{table}[H]
\begin{centering}
\begin{tabular}{|c|c|c|c|c|c|c|}
\cline{2-7}
\multicolumn{1}{c|}{} & \multicolumn{6}{c|}{$Pr(\mathcal{D}(P,F_{0})<c)$}\tabularnewline
\hline
$\alpha$ & $c=1$ & $c=2$ & $c=3$ & $c=4$ & $c=5$ & $c=6$\tabularnewline
\hline
1 & 0.298 & 0.745 & 0.812 & 0.857 & 0.893 & 0.933\tabularnewline
\hline
10 & 0.068 & 0.273 & 0.480 & 0.624 & 0.717 & 0.781\tabularnewline
\hline
50 & 0.029 & 0.143 & 0.311 &  0.474 & 0.612 & 0.696\tabularnewline
\hline
100 & 0.027 & 0.116 & 0.258 &  0.409 & 0.540 & 0.648\tabularnewline
\hline
200 & 0.020 & 0.094 & 0.219 & 0.353 & 0.492 & 0.595\tabularnewline
\hline
300 & 0.011 & 0.073 & 0.179 & 0.297 & 0.432 & 0.542\tabularnewline
\hline
500 & 0.009 & 0.057 & 0.150 & 0.263 & 0.368 & 0.484\tabularnewline
\hline
\end{tabular}
\par\end{centering}

\caption{\label{tab:1}{\small{The computed the probability $Pr\left(\mathcal{D}(P,F_{0})<c\right)$
for different choices of $\alpha$ and $c$ in Example 6.1.}}}
\end{table}

\begin{example}
Suppose $X_{1},\ldots,X_{150}$ is a random sample from a standard
Cauchy distribution. We want to test the null hypothesis $H_{0}:F=N(0,1)$.
We divide the sample space into $k=7$ bins $A_{i},\, i=1,\ldots,7$
as given in Table \ref{tab:A-sample-table-1} and $Pr(A_{i})$ shows
the observed probability of each bin. . We consider $H=N(0,1)$ as
the base measure and $n=2000$ terms in the finite sum representation
of Dirichlet process as given in \eqref{eq:pd}. Then, an appropriate
concentration parameter $\alpha=100$ is obtained when $q=0.54$ and
$c=5$. By sampling $N=2000$ times from the Dirichlet process posterior
$P_{m}^{*}$ and then $N=2000$ realizations of $\mathcal{D}^{*}$,
we obtain $Pr(\mathcal{D}(P_{m}^{*},F_{0})\leq c)=0$. Thus, we reject
the normality hypothesis of the data. Our decision is consistent with
the classical chi-squared test which gives a p-value of $2.2\times10^{-16}$.
Also, our decision is consistent with other choices of the base measure
$H$, since the Dirichlet process posterior converges to the true
underlying distribution as the data size increases. Table \ref{tab:A-sample-table-1}
illustrates the observed probabilities obtained by counting the data
points in each bin and the corresponding probabilities computed by
the Dirichlet process posterior.

\begin{table}[H]
\begin{centering}
{\scriptsize{}}%
\begin{tabular}{|c|c|c|c|c|c|c|c|}
\cline{2-8}
\multicolumn{1}{c|}{} & \multicolumn{7}{c|}{{\scriptsize{$X$}}}\tabularnewline
\cline{2-8}
\multicolumn{1}{c|}{} & {\scriptsize{$A_{1}=(-\infty,-2]$}} & {\scriptsize{$A_{1}=(-2,-1]$}} & {\scriptsize{$A_{2}=(-1,0]$}} & {\scriptsize{$A_{3}=(0,1]$}} & {\scriptsize{$A_{4}=(1,2]$}} & {\scriptsize{$A_{5}=(2,3]$}} & {\scriptsize{$A_{5}=(3,\infty)$}}\tabularnewline
\hline
{\scriptsize{$Pr(A_{i})$}} & {\scriptsize{0.133 }} & {\scriptsize{0.100}} & {\scriptsize{0.313}} & {\scriptsize{0.240}} & {\scriptsize{0.060}} & {\scriptsize{0.067}} & {\scriptsize{0.087}}\tabularnewline
\hline
{\scriptsize{$P_{m}^{*}(A_{i})$}} & {\scriptsize{0.072 }} & {\scriptsize{0.131}} & {\scriptsize{0.342}} & {\scriptsize{0.310}} & {\scriptsize{0.069}} & {\scriptsize{0.030}} & {\scriptsize{0.046}}\tabularnewline
\hline
{\scriptsize{$F_{0}(A_{i})$}} & {\scriptsize{0.023}} & {\scriptsize{0.136}} & {\scriptsize{0.341 }} & {\scriptsize{0.341}} & {\scriptsize{0.136}} & {\scriptsize{0.022}} & {\scriptsize{0.001}}\tabularnewline
\hline
\end{tabular}
\par\end{centering}{\scriptsize \par}

\caption{{\small{\label{tab:A-sample-table-1}The computed probabilities $Pr(A_{i}),$
}}$P_{m}^{*}(A_{i})${\small{ and $F_{0}(A_{i})$ where $Pr(A_{i})$
is the observed probability obtained by counting the data points in
$i$th bin, }}$P_{m}^{*}(A_{i})${\small{ is the corresponding probability
computed by the Dirichlet process posterior for one simulation and
$F_{0}(A_{i})$ shows the corresponding expected probability under
the null hypothesis}}.}
\end{table}

Figure \ref{fig:Q-Q-1} shows the Q-Q plot, the empirical distribution
and the histogram of $N=2000$ randomly generated from the prior distance
$\mathcal{D}=\mathcal{D}(P,F_{0})$ compared with a $\chi_{(4)}^{2}$
distribution, respectively.
\end{example}
$\,$
\begin{example}
(Example 3.6. \citet{hamada2008bayesian}) Suppose we have an observed
data of size $m=31$ for the lifetime of the liquid crystal display
(LCD) projector lamps. We want to test if the lifetime distribution
of the liquid crystal display (LCD) projector lamps is an Exponential
distribution with parameter $\theta>0$. That is, we want to test
the null hypothesis $H_{0}:F_{\theta}=Exp(\theta)$, where $\theta$
has a Gamma (1.7, 2550) prior distribution. Hence, the posterior distribution
of $\theta$ given data is a Gamma (32.7, 20457) distribution. We
consider $k=4$ bins. By specifying the values $q=0.51$ and $c=3$,
the appropriate $\alpha=100$ is obtained. We obtain $\theta_{1}^{*},\ldots,\theta_{M}^{*}$
as realizations from the distribution of $\left(\theta\mid X_{1},\ldots,X_{31}\right)$
and we get $\theta_{Min}=0.00136$. By generating $N=2000$ times
from $\mathcal{D}^{*}=\mathcal{D}(P_{m}^{*},F_{\theta_{Min}})$, we
obtain $Pr(\mathcal{D}(P_{m}^{*},F_{\theta_{Min}})\leq c)=0.71$.
Hence, there is no evidence to reject the null hypothesis.
\end{example}
\,
\begin{example}
Suppose we have a random sample $(X_{i},Y_{i}),\, i=1,\ldots,150$
from a bivariate normal distribution $F=N_{2}(\mathbf{\boldsymbol{\mu}},\Sigma)$
where $\boldsymbol{\mu}=\left[\begin{array}{c}
0\\
0
\end{array}\right]$ and $\Sigma=\left[\begin{array}{cc}
10 & 3\\
3 & 2
\end{array}\right].$ We consider five levels of variable $X$ and four levels of variable
$Y$ as given in Table \ref{tab:indep}. We want to test the null
hypothesis of independence as given in Section 5. Consider a Dirichlet
process prior with base distribution  $H=N(\boldsymbol{\mu}_{1},\Sigma_{1})$
where $\mathbf{\boldsymbol{\mu}}_{1}=\left[\begin{array}{c}
0\\
0
\end{array}\right]$ and $\Sigma_{1}=\left[\begin{array}{cc}
1 & 0\\
0 & 1
\end{array}\right].$ For $q=0.5$ and $c=20$, by generating $N=2000$ times from $\mathcal{D}$
and solving the equation $Pr(\mathcal{D}<c)=q$, we obtain an appropriate
concentration parameter $\alpha=100$. By generating a sample of size
$N=2000$ from the posterior distance $\mathcal{D}^{*},$ we have
$Pr(\mathcal{D}^{*}<c)=0$. Therefore, we reject the null hypothesis
of independence. The p-value of $8.34\times10^{-6}$ obtained by the
classical chi-squared test of independence results in the same conclusion.
Table {\footnotesize{\ref{tab:indep} }}represents the probability
of each category calculated by the Dirichlet process posterior.

\begin{table}[H]
\begin{centering}
{\footnotesize{}}%
\begin{tabular}{|c|c|c|c|c|c|c|}
\cline{3-7}
\multicolumn{1}{c}{} &  & \multicolumn{5}{c|}{{\footnotesize{$X$}}}\tabularnewline
\cline{3-7}
\multicolumn{1}{c}{} &  & {\footnotesize{$A_{1}=(-\infty,-1]$}} & {\footnotesize{$A_{2}=(-1,0]$}} & {\footnotesize{$A_{3}=(0,1]$}} & {\footnotesize{$A_{4}=(1,2]$}} & {\footnotesize{$A_{5}=(2,\infty)$}}\tabularnewline
\hline
\multirow{4}{*}{{\footnotesize{$Y$}}} & {\footnotesize{$B_{1}=(-\infty,-1]$}} & {\footnotesize{0.076}} & {\footnotesize{0.069}} & {\footnotesize{0.005}} & {\footnotesize{0.066}} & {\footnotesize{0.000}}\tabularnewline
\cline{2-7}
 & {\footnotesize{$B_{2}=(-1,0]$}} & {\footnotesize{0.075}} & {\footnotesize{0.031}} & {\footnotesize{0.086}} & {\footnotesize{0.063}} & {\footnotesize{0.006}}\tabularnewline
\cline{2-7}
 & {\footnotesize{$B_{3}=(0,1]$}} & {\footnotesize{0.072}} & {\footnotesize{0.047}} & {\footnotesize{ 0.045}} & {\footnotesize{0.048}} & {\footnotesize{0.043}}\tabularnewline
\cline{2-7}
 & {\footnotesize{$B_{4}=(1,\infty)$}} & {\footnotesize{0.014}} & {\footnotesize{ 0.061}} & {\footnotesize{0.044}} & {\footnotesize{0.025}} & {\footnotesize{0.125}}\tabularnewline
\hline
\end{tabular}
\par\end{centering}{\footnotesize \par}

\caption{{\small{\label{tab:indep}A sample table of probabilities computed
by the Dirichlet process posterior in Example 6.4.}}}
\end{table}

\end{example}

\section{\noindent Discussion}

In this paper, we proposed a Bayesian nonparametric chi-squared goodness
of fit test based on the Kullback-Leibler distance between the Dirichlet
process posterior and the hypothesized distribution. Our method proceeds
by placing a Dirichlet process prior on the distribution of observed
data and computing the probability of each bin of the partition from
the Dirichlet process posterior. The suggested method is in contrast
with the frequentist's Pearson's chi-squared goodness of fit test
which is based on counting the observations in each bin of the partition.
We also extended our method to present a Bayesian nonparametric test
of independence. Like the classical chi-squared test, we can generalize
our goodness-of-fit test to several variables. For categorical observations
with finite many categories, placing a Dirichlet distribution prior
on the probabilities of categories and deriving the posterior Dirichlet
distribution can establish similar tests. For example, the test of
independence and conditional independence of qualitative observations
follow easily.

\section*{Acknowledgments}

This research was supported by grant funds from the Natural Science
and Engineering Research Council of Canada.

\section*{\textmd{\normalsize{\bibliographystyle{elsart-harv}
\bibliography{BIB}
 }}}

\section*{Appendix - Proofs of Theoretical Results}

Proof of Lemma \ref{Lemma}:

Suppose that the sample space is partitioned as $x_{(1)}<\cdots<x_{(n+1)}$
such that $x_{(i)}<\theta_{(i)}<x_{(i+1)},$$\, i=1,\ldots,n$. By
definition of the Kullback-Leibler distance, we have

\begin{flushleft}
\begin{eqnarray}
D_{KL}(P_{n} & \parallel & F)=\overset{n}{\underset{i=1}{\sum}}\triangle P_{n}(x_{i})\log\left(\frac{\triangle P_{n}(x_{i})}{\triangle F(x_{i})/\triangle x_{i}}\right)\nonumber \\
 &  & =\overset{n}{\underset{i=1}{\sum}}\triangle P_{n}(x_{i})\log(\triangle P_{n}(x_{i}))-\overset{n}{\underset{i=1}{\sum}}\triangle P_{n}(x_{i})\log\left(\frac{\triangle F(x_{i})}{\triangle x_{i}}\right)\nonumber \\
 &  & =\overset{n}{\underset{i=1}{\sum}}p_{i,n}\log(p_{i,n})-\overset{n}{\underset{i=1}{\sum}}p_{i,n}\log\left(\frac{\triangle F(x_{i})}{\triangle x_{i}}\right)\nonumber \\
 &  & =-\mathcal{H}(\mathbf{p})-\overset{n}{\underset{i=1}{\sum}}p_{i,n}\log\left(\frac{\triangle F(x_{i})}{\triangle x_{i}}\right),\label{eq:dd1}
\end{eqnarray}
where $\triangle F(x_{i})=F(x_{(i+1)})-F(x_{(i)}),$$\,\triangle x_{i}=x_{(i+1)}-x_{(i)}$,
$p_{i,n}=P_{n}(x_{(i+1)})-P_{n}(x_{(i)})=P_{n}(\theta_{(i)})$ and
$\mathcal{H}(\mathbf{p})=-\overset{n}{\underset{i=1}{\sum}}p_{i,n}\log(p_{i,n})$
is the entropy of $P_{n}$. Similarly, we get
\par\end{flushleft}

\begin{eqnarray}
D_{KL}(F & \parallel & P_{n})=-\mathcal{H}(\mathbf{q})-\overset{n}{\underset{i=1}{\sum}}q_{i}\log(p_{i,n}),
\end{eqnarray}
where $q_{i}=\frac{\triangle F(x_{i})}{\triangle x_{i}}$ and $\mathcal{H}(\mathbf{q})=-\overset{n}{\underset{i=1}{\sum}}q_{i}\log q_{i}.$

Proof of Theorem \ref{prob d} and Remark \ref{remark d}:

We have $(p_{1,n},\ldots,p_{n,n})\thicksim\textrm{Dir}(\alpha/n,\ldots,\alpha/n)$.
Thus, $p_{i,n}\thicksim\textrm{Beta}(\frac{\alpha}{n},\alpha(1-\frac{1}{n})),\, i=1,\ldots,n$
and all computations for the mean and variance simply follow.

Proof of Theorem \ref{prop 1}:

We basically mimic the proof for the asymptotic frequentist's chi-squared
goodness-of-fit test. Define

\begin{equation}
\mathcal{D}^{*}=(\alpha+m)\underset{i=1}{\overset{k}{\sum}}\frac{(P_{m}^{*}(A_{i})-H_{m}^{*}(A_{i}))^{2}}{H_{m}^{*}(A_{i})}.\label{eq:a1}
\end{equation}
Let $\mathbf{Y}_{m}^{T}=(Y_{1,m},\ldots,Y_{k,m})=(P_{m}^{*}(A_{1}),\ldots,P_{m}^{*}(A_{k}))$
and\\
 $\mathbf{v}_{m}^{T}=(v_{1,m},\ldots,v{}_{k,m})=(H_{m}^{*}(A_{1}),\ldots,H_{m}^{*}(A_{k}))$.
By Lemma \ref{Lemm}, as $m\rightarrow\infty$,

\begin{equation}
\sqrt{\alpha+m}(\mathbf{Y}_{m}-\mathbf{v}_{m})^{T}\overset{d}{\rightarrow}N_{k}(\mathbf{0},\Sigma).\label{eq:y1}
\end{equation}
In here, $\Sigma=\left(\sigma_{ij}\right)_{k\times k}$ is the covariance
matrix with $\sigma_{ii}^{2}=\textrm{var}(Y_{i,m})=F(A_{i})(1-F(A_{i})),\, i=1,\ldots,k$
and $\sigma_{ij}=cov(Y_{i,m},Y_{j,m})=-F(A_{i})F(A_{j})$. Then, \eqref{eq:a1}
can be written as

\begin{equation}
\mathcal{D}^{*}=(\alpha+m)(\mathbf{Y}_{m}-\mathbf{v}_{m})^{T}\Sigma{}^{-1}(\mathbf{Y}_{m}-\mathbf{v}_{m}).\label{eq:D}
\end{equation}
 Note that the sum of the $j$th column of $\Sigma$ is $F(A_{j})\lyxmathsym{\textminus}F(A_{j})(F(A_{1})+\cdots+F(A_{k}))=0$,
that implies the sum of the rows of $\Sigma$ is the zero vector,
therefore $\Sigma$ is not invertible. To avoid dealing with this
singular matrix, we define $\mathbf{Y}_{m}^{*T}=(Y_{1,m},\ldots,Y_{k-1,m})$.
Let $\mathbf{Y}_{m}^{*}$ be the vector consisting of the first $k-1$
components of $\mathbf{Y}_{m}.$ Then, the covariance matrix of $\mathbf{Y}_{m}^{*}$
is the upper-left $(k-1)\times(k-1)$ sub-matrix of $\Sigma$ which
is denoted by $\Sigma^{*}$. Similarly, let $\mathbf{v}_{m}^{*T}$
denotes the vector $\mathbf{v}_{m}^{*T}=(v_{1,m},\ldots,v{}_{k-1,m})$.
It can be verified simply that $\Sigma^{*}$ is invertible. Furthermore,
\eqref{eq:D} can be rewritten as

\begin{equation}
\mathcal{D}^{*}=(\alpha+m)(\mathbf{Y}_{m}^{*}-\mathbf{v}_{m}^{*})^{T}(\Sigma^{*})^{-1}(\mathbf{Y}_{m}^{*}-\mathbf{v}_{m}^{*}).\label{eq:D-1}
\end{equation}
Define

\[
\mathbf{Z}_{m}^{T}=\sqrt{\alpha+m}(\Sigma^{*})^{-1/2}(\mathbf{Y}_{m}^{*}-\mathbf{v}_{m}^{*})^{T}.
\]
The central limit theorem implies $\mathbf{Z}_{m}^{T}\overset{d}{\rightarrow}N_{k-1}(\mathbf{0},I)$.
By definition, the $\chi_{(k-1)}^{2}$ distribution is the distribution
of the sum of the squares of $k\lyxmathsym{\textminus}1$ independent
standard normal random variables. Therefore,

\[
\mathcal{D}^{*}=\mathbf{Z}_{m}^{T}\mathbf{Z}_{m}\overset{d}{\rightarrow}\chi_{(k-1)}^{2}.
\]
\newpage{}

\noindent
\begin{figure}[H]
\centering{}\includegraphics[scale=0.35]{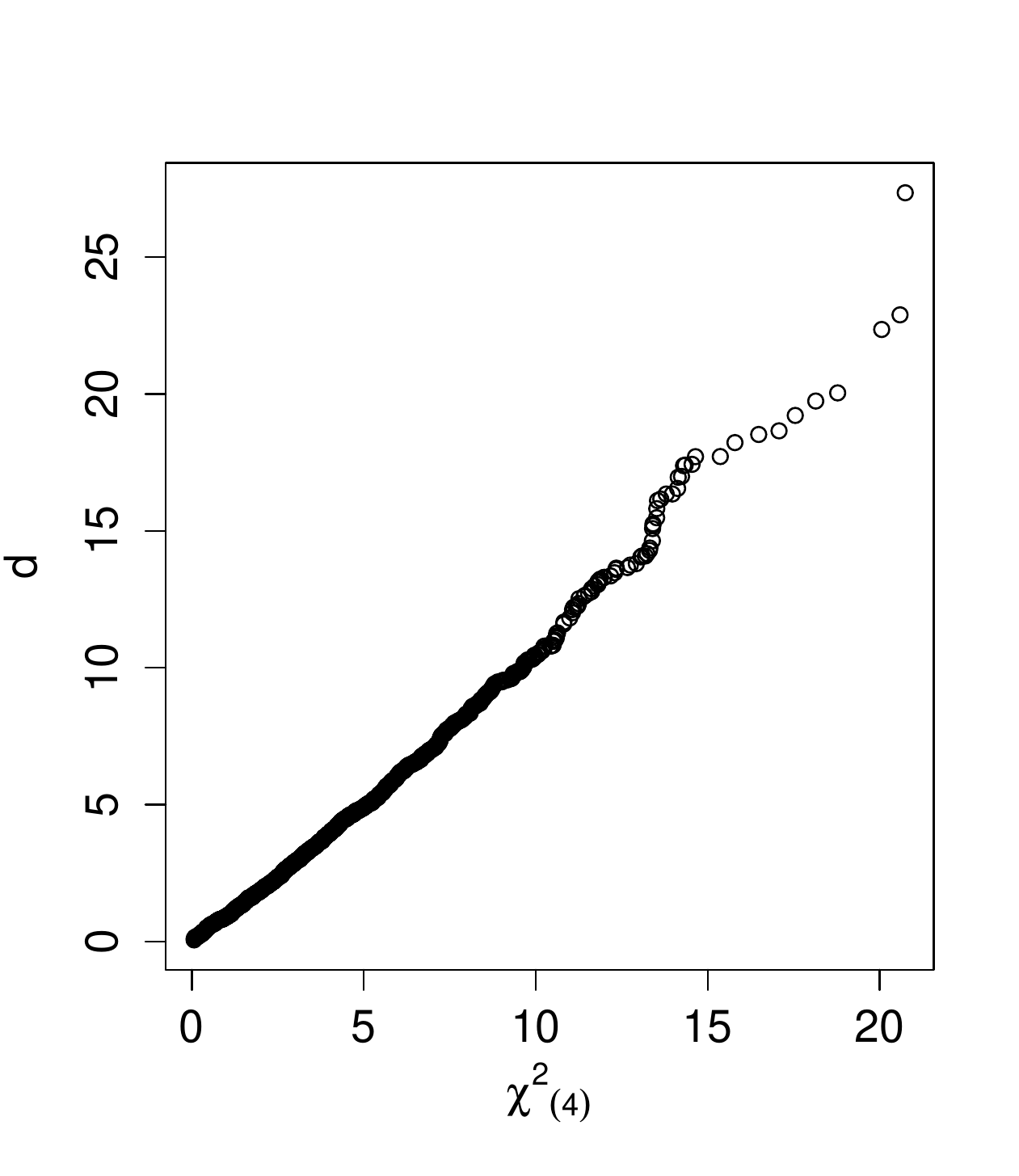}\includegraphics[scale=0.35]{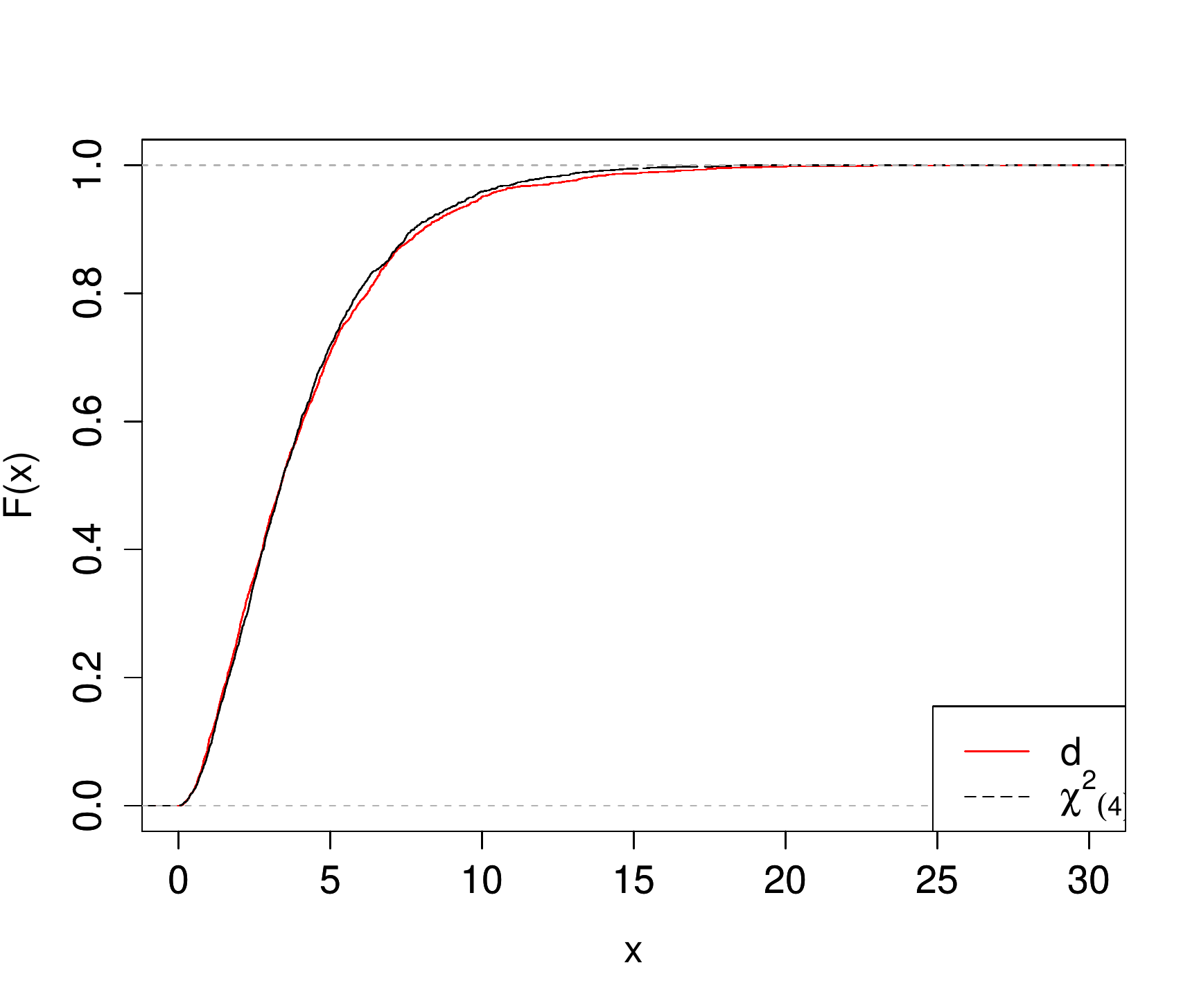}\includegraphics[scale=0.35]{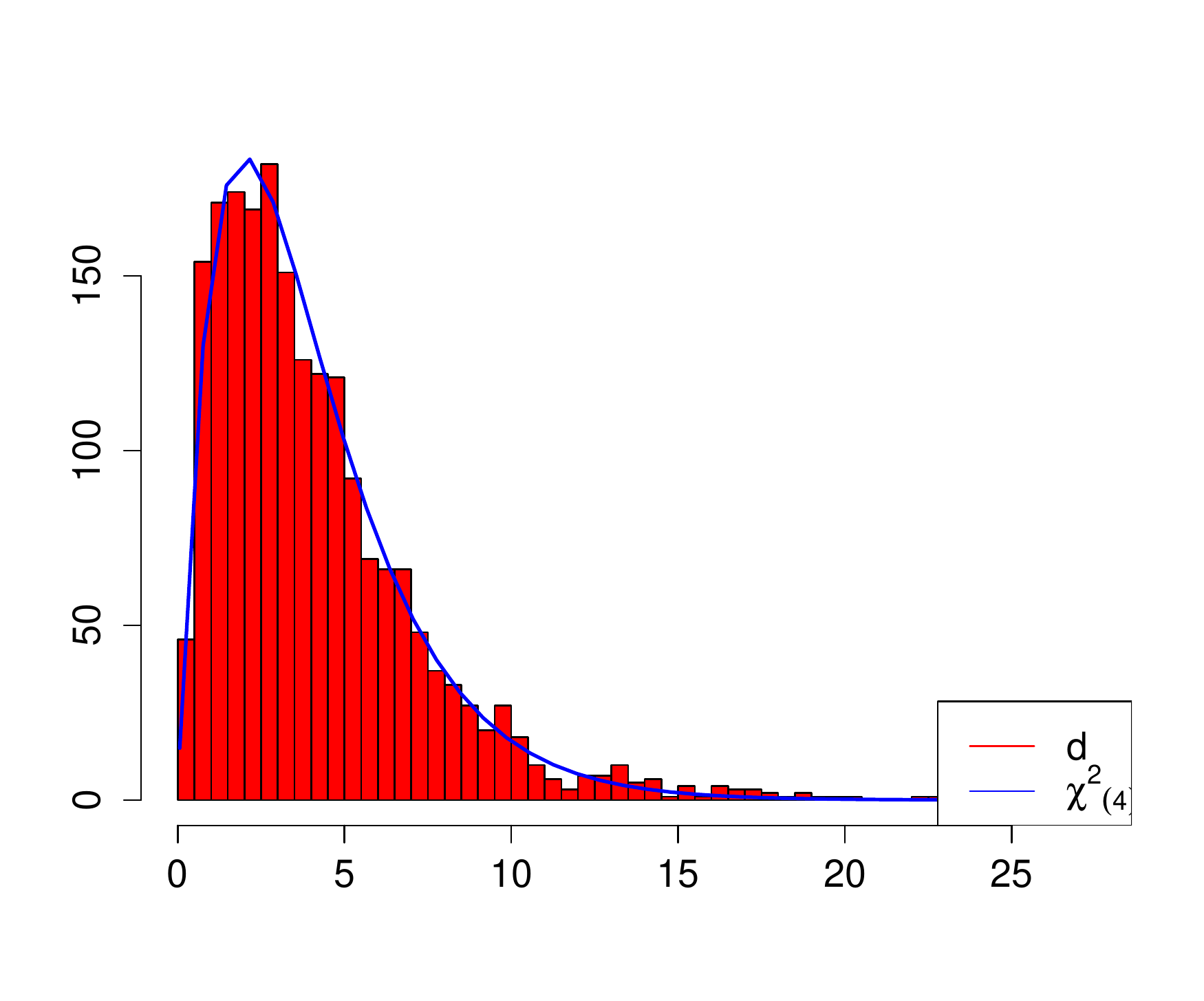}\caption{{\small{\label{fig:Q-Q-1}}} (Left) The Q-Q plot of $N=2000$ realizations
of $\mathcal{D}=\mathcal{D}(P,H)$ with $\alpha=100$, $H=N(0,1)$,
$k=5$ and $n=3000$ versus a $\chi_{(4)}^{2}$ distribution. (Middle)
The empirical distribution function of $\mathcal{D}$ values and the
cdf of a $\chi_{(4)}^{2}$ distribution. (Right) The histogram of
$\mathcal{D}$ values and the pdf of a $\chi_{(4)}^{2}$ distribution.}
\end{figure}

\end{document}